\numberwithin{equation}{section}
\newtheorem{theorem}{Theorem}[section]
\newtheorem{lemma}[theorem]{Lemma}
\newtheorem{proposition}[theorem]{Proposition}
\newtheorem{definition}[theorem]{Definition}
{
    \catcode`\@=11
    \gdef\dima{\mathop{\operator@font dim_{A}}\nolimits}
    \gdef\dimh{\mathop{\operator@font dim_{ H}}\nolimits}
    \gdef\dimb{\mathop{\operator@font dim_{ B}}\nolimits}
    \gdef\dims{\mathop{\operator@font dim_{ S}}\nolimits}
    \gdef\diam{\mathop{\operator@font diam}\nolimits}
    \gdef\span{\mathop{\operator@font span}\nolimits}
    \gdef\rank{\mathop{\operator@font rank}\nolimits}
    \gdef\dh{\mathop{\operator@font d_{\cal H}}\nolimits}
    \gdef\ph{\mathop{\operator@font p_{\cal H}}\nolimits}
    \gdef\cover{\mathop{\operator@font {\cal N}}\nolimits}
    \gdef\cone{\mathop{\operator@font {\cal C}}\nolimits}
    \gdef\card{\mathop{\operator@font card}\nolimits}
}
\def\R{\mathbb{R}}
\def\N{\mathbb{N}}
\def\I{\mathcal{I}}
\def\J{\mathcal{J}}
\def\words#1{\quad\hbox{#1}\quad}
\title{On the Assouad dimension of self-similar sets with overlaps}
\author{J. M. Fraser$^{1}$,
	A. M. Henderson$^{2}$,
	E. J. Olson$^{3}$,
	J. C. Robinson$^{4}$}
\begin{document}
      \maketitle

\begin{center}
$^1$School of Mathematics, The University of Manchester, Manchester, M13 9PL, UK.\\ \vspace{3mm} 
$^2$Department of Mathematics, University of California, Riverside, 900 University Ave., Riverside, CA 92521, USA.\\ \vspace{3mm} 
$^3$Department of Mathematics/084, University of Nevada, Reno, NV 89557, USA.\\ \vspace{3mm} 
$^4$Mathematics Institute, Zeeman Building, University of Warwick, Coventry, \\CV4 7AL, UK.
\end{center}
\vspace{0mm}
\begin{abstract}
  It is known that, unlike the Hausdorff dimension, the Assouad dimension
  of a self-similar set can exceed the similarity dimension if there are
  overlaps in the construction.  Our main result is the following precise
  dichotomy for self-similar sets in the line: either the \emph{weak
  separation property} is satisfied, in which case the Hausdorff and
  Assouad dimensions coincide; or the \emph{weak separation property}
  is not satisfied, in which case the Assouad dimension is maximal
(equal to one).
  In the first case we prove that the self-similar set
  is Ahlfors regular, and in the second case we use the fact that if the
  \emph{weak separation property} is not satisfied, one can approximate
  the identity arbitrarily well in the group generated by the similarity
  mappings, and this allows us to build a \emph{weak tangent} that
  contains an interval.  We also obtain results in higher dimensions
  and provide illustrative examples showing that the `equality/maximal'
  dichotomy does not extend to this setting. \\

\noindent \emph{AMS Classification 2010:} 28A80, 37C45 (primary);
	28A78 (secondary). \\

\noindent \emph{Keywords}: Assouad dimension, self-similar set, weak
separation property, overlaps.

\end{abstract}

\section{Introduction}

Self-similar sets are arguably the most important examples of fractal
sets.  This is due to their simple description and the fact that they
exhibit many of the properties one expects from fractals.  Although
they have been studied extensively in the literature over the past
half century, there are still many fascinating and challenging open
problems available, particularly relating to the overlapping case, that is, self-similar sets $F$ such that every iterated function
system with attractor $F$ fails to satisfy the open set condition.
Our paper focuses on the Assouad dimension of such sets.

Given a bounded set $E \subseteq \R^d$ where $d\in\N$ and $\rho>0$,
let $\cover_E(\rho)$ be the smallest number of open balls of radius
$\rho$ required to cover $E$.  Given a (potentially unbounded) set $X
\subseteq \R^d$, let
$$
\cover_X(r,\rho) \ =  \  \max\big\{\, \cover_{X\cap B_r(x)}(\rho) :
x\in X\, \big\},
$$
where
$$
B_r(x)=\{\, y\in\R^d : \|x-y\|<r\,\}
$$
denotes the open ball of radius $r$ centered at $x$.  The {\it Assouad
  dimension\/} $\dima X$ is then defined to be the infimum over all
$s$ for which there exists $K_s$ such that $\cover_X(r,\rho)\le K_s
(r/\rho)^s$ for all $0<\rho<r\le 1.$ We note that some authors allow
$r$ (and $\rho<r$) to be arbitrarily large, but for bounded sets $X$
this does not change the dimension.

This dimension was introduced by Bouligand \cite {Bouligand1928} to
study bi-Lipschitz embeddings and was further studied by Assouad in
\cite{Assouad1977, Assouad1979}.  Although an interesting notion in
its own right, it has found most prominence in the literature due to
its relationship with embeddability problems and quasi-conformal
mappings.  For example, Olson \cite{Olson2002} proved that for $X
\subseteq \R^d$, $\dima (X-X)<s$ implies almost every orthogonal
projection of rank $s$ is injective on $X$ and results in an embedding
that is bi-Lipschitz except for a logarithmic correction.  This result
was extended to subsets of infinite-dimensional Hilbert spaces by
Olson and Robinson in \cite{Olson2010}. Further work relating to
Assouad dimension and embeddings may be found in
Heinonen~\cite{Heinonen2001}, Luukkainen~\cite{Luukkainen1998}, Mackay
and Tyson~\cite{Mackay2010} and Robinson~\cite{Robinson2011}.  Concerning quasi-conformal mappings, there is widespread interest in computing the conformal (Hausdorff or Assouad) dimension in various settings.  Here the conformal dimension of $X$ is the infimum of the dimension of quasi-conformal images of $X$.   Keith and Laakso~\cite{KeithLaakso04} study Ahlfors regular metric spaces whose Assouad dimension cannot be lowered by quasi-conformal maps and Bonk and Kleiner~\cite{BonkKleiner05} apply this work to resolve Cannon's conjecture in certain settings.  Tyson~\cite{Tyson01} proved that if the conformal Assouad dimension of a bounded set in Euclidean space is strictly less than 1, then it is 0.
Recently, the Assouad dimension has also been gaining substantial
attention in the literature on fractal geometry.

Let $\I = \{1, \dots, \lvert \I \vert \}$ be a finite index set and $\{ S_i \}_{i \in \I}$ be a set of
contracting self-maps on some compact subset of $\R^d$.  Such a
collection of maps is called an {\it iterated function system} (IFS).
It is well known, see for example
Edgar~\cite{Edgar2008} or Falconer~\cite{Falconer1985}, that there exists a unique non-empty compact
set $F \subseteq \R^d$ satisfying
\begin{equation}\label{attractor}
  F = \bigcup_{i\in \I} S_i (F)
\end{equation}
which is called the \emph{attractor} of the iterated function system.
In this paper we shall work exclusively in the setting where the maps
are \emph{similarities}. A map $S\colon \R^d\to \R^d$ is called a {\it
  similarity\/} if there exists a ratio $c>0$ such that
$$
\|S(x)-S(y)\|=c \|x-y\| \words{for all} x,y\in \R^d.
$$
If $c\in (0,1)$ then $S$ is called a {\it contracting similarity}.  A
{\it self-similar set\/} is any set $F\subset\R^d$ for which there
exists an iterated function system of contracting similarities
$\{S_i\}_{i\in\I}$ that has $F$ as its attractor.  Without loss of
generality we will assume from now on that $F \subseteq [0,1]^d$.  We say $F$ is \emph{non-trivial} if it is not a singleton.  The trivial situation can only occur if all the defining maps have a common fixed point, in which case the attractor is just this point.

Given an iterated function system of contracting similarities, the
\emph{similarity dimension} $\dims (\{S_i\}_{i\in\I})$ is defined to
be the maximum of the spatial dimension $d$ and the unique solution
$s$ of the \emph{Hutchinson--Moran formula}
\[
\sum_{i \in \I} c_i^s = 1.
\]
Note that the similarity dimension is always an upper bound for the
Hausdorff dimension of $F$ which we shall denote by $\dimh F$.  The
Assouad dimension is also an upper bound on the Hausdorff dimension
(see \cite{Robinson2011}, for example).

Most of the commonly used notions of dimension coincide for arbitrary
self-similar sets, regardless of separation conditions.  Indeed,
equality of the box, packing and Hausdorff dimensions has been long
known and follows, for example, from the implicit theorems of
Falconer~\cite{Falconer1989} and McLaughlin~\cite{McLaughlin1987}.
The additional equality with the lower dimension, which is the natural
dual to the Assouad dimension and is \emph{a priori } just a lower
bound for the Hausdorff dimension, was proved by Fraser \cite[Theorem
2.11]{Fraser2013}.  For this reason we will omit discussion of all
notions of dimension apart from the Hausdorff and Assouad dimensions.
For a review of the other notions see Falconer \cite[Chapters
2-3]{Falconer2003} or Robinson \cite{Robinson2011}.

In \cite{Moran1946} Moran introduced the \emph{open set condition}
(OSC) which, if satisfied, guarantees equality of the Hausdorff
dimension and the similarity dimension, see for example ~\cite[Section
9.3]{Falconer2003}.  It can occur that the Hausdorff dimension is
strictly less than the similarity dimension if different iterates of
maps in the iterated function system overlap exactly.  An example of
this dimension drop is given by the modified Sierpi\'nski triangle of
Ngai and Wang \cite[Figure 2, page 670]{Ngai2001}.  It is an important
open problem to decide if exact overlaps are the only way such a
dimension drop can occur, see for example Peres and
Solomyak~\cite[Question 2.6]{Peres2000}.  Recently an important step
towards solving this problem has been made by Hochman
\cite{Hochman2013}, where it is shown in $\R$, assuming the defining
parameters for the iterated function system are algebraic, that the
only cause for a dimension drop is exact overlaps.  Hochman's
arguments rely on a careful application of ergodic theory and in
establishing inverse theorems for the growth of certain entropy
functions.

\begin{definition}
  An IFS $\{S_i\}_{i \in \I}$ satisfies the
  \emph{open set condition} if there exists a non-empty open set $U$
  such that
  \[
  \bigcup_{i \in \I} S_{i}(U) \subseteq U
  \]
  with the union disjoint.
\end{definition}

Note that the open set $U$ need not be unique.  Falconer \cite[page
122]{Falconer1985} shows that if the OSC is satisfied for some $U$, then the attractor $F\subseteq \overline U$ and a result of
Schief \cite{Schief1994} is that we may assume that $F \cap U \neq
\emptyset$.  Notably, if the OSC is satisfied, then the Assouad
dimension also coincides with the Hausdorff, and thus similarity,
dimension.  This result, which has been known in the folklore and
essentially dates back to
Hutchinson, is due to the fact that self-similar sets are Ahlfors regular,
which is a sufficient condition for equality of Hausdorff and Assouad
dimension. With $\mathcal{H}^s$ denoting the $s$-dimensional
Hausdorff measure, recall that a set $F \subseteq \R^d$ is called
\emph{Ahlfors regular} if there exist constants $a,b>0$ such that
$$
a\, r^s \leq \mathcal{H}^s \big( B_r(x) \cap F \big) \leq b \, r^s
$$
for $s=\dimh F$, all $x \in F$ and all $0<r< 1$, see Heinonen \cite[Chapter
8]{Heinonen2001}.  We note that it is sufficient to show that there exists some $r_0>0$ such that these bounds hold for all $r \in (0, r_0)$. Without relying on the sophisticated notion of
Ahlfors regularity, simple direct proofs are also possible, see for
example Olson \cite{Olson2002}, Olsen \cite{Olsen2011}, Henderson
\cite{Henderson2013}, and Fraser \cite{Fraser2013}.  Olsen
\cite[Question 1.3]{Olsen2011} asked the natural question of whether
$\dimh F=\dima F$ holds for any self-similar set $F$, regardless of
separation properties.  This was answered in the negative by Fraser
\cite[Section 3.1]{Fraser2013}.  Similar techniques were used by
Henderson \cite{Henderson2013} to find examples of self-similar sets
such that $\dima F$ is arbitrarily small but where $\dima(F-F)=1$.
The Assouad dimension of more general attractors has also been
considered by, for example, Mackay \cite{Mackay2011} and Fraser
\cite{Fraser2013} who studied certain planar self-affine
constructions.

The weak separation property of Lau and Ngai \cite{Lau1999} and
Zerner~\cite{Zerner1996} will be our primary tool for studying
self-similar sets with overlaps and, indeed, provides a pleasing
dichotomy concerning Assouad dimension. Before describing the weak
separation property we introduce some notation.

Let $\I^* = \bigcup_{k\geq1} \I^k$ be the set of all finite sequences
with entries in $\I$.  For
\[
\alpha= \big(i_1, i_2, \ldots, i_k \big) \in \I^*
\]
write
\[
S_{\alpha} = S_{i_1} \circ S_{i_2} \circ \cdots \circ S_{i_k}
\]
and
\[
c_{\alpha} = c_{i_1} c_{i_2} \cdots c_{i_k}.
\]
Note that $S_\alpha$ is a contracting similarity with ratio
$c_\alpha$.  Let
\[
\overline{\alpha} = (i_1,i_2, \ldots, i_{k-1}) \in \I^* \cup \{
\omega\},
\]
where $\omega$ is the empty sequence of length zero.  For notational
convenience define $S_\omega = I$ to be the identity map and
$c_\omega=1$.  Let
\[
\mathcal{E} = \{ S_{\alpha}^{-1} \circ S_{\beta} : \alpha, \beta \in
\I^* \text{ with } \alpha \neq \beta \}
\]
and equip the group of all similarities on $\R^d$ with the topology
induced by pointwise convergence.  We now define the weak separation
property.
\begin{definition}[Lau and Ngai \cite{Lau1999}, Zerner \cite{Zerner1996}]\label{WSP}
  An IFS $\{S_i\}_{i \in \I}$ satisfies the
  \emph{weak separation property} if
  \[
  I \notin \overline{ \mathcal{E} \setminus \{I\}}.
  \]
\end{definition}
The weak separation property says that we cannot find a sequence of
pairs of maps $( S_{\alpha}, S_{\beta} )$ such that
$S_\alpha^{-1}\circ S_\beta$ gets arbitrarily close but not equal to
the identity.  Intuitively, this means that the images $S_\alpha(F)$ of the attractor $F$ 
may only overlap in a limited number of ways.

It was shown by Zerner \cite{Zerner1996} that the open set condition is
strictly stronger than the weak separation property.  In particular
using results from Schief \cite{Schief1994} it was shown that the open set
condition is equivalent to $ I \notin \overline{ \mathcal{E} }$. Since
all maps in $\mathcal{E}$ are similarities, taking the pointwise
closure of $\mathcal{E}$ is equivalent to taking the closure in the
space of similarities $S\colon [0,1]^d \to \R^d$ equipped with the
uniform norm $ \| \cdot \|_{L^\infty([0,1]^d)}$ which we denote by $\|
\cdot \|_\infty$ throughout, and where the value of $d$ will be clear from
the context.

Our main results may be stated as follows. In the case of self-similar
sets in the real line we obtain a precise dichotomy.

\begin{theorem}\label{mainR}
  Let $F$ be a non-trivial self-similar set in $\R$.  If the defining IFS for $F$ satisfies
  the weak separation property, then $\dima F = \dimh F$; otherwise,
  $\dima F=1$.
\end{theorem}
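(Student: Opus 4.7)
This is a dichotomy, so I split along whether the defining IFS satisfies the weak separation property. The WSP half reduces to showing that $F$ is Ahlfors regular at $s = \dimh F$, which by the discussion preceding the theorem yields $\dima F = s = \dimh F$. The non-WSP half is the substantive part: I would exploit the near-identity elements in $\overline{\mathcal{E}}$ to build a weak tangent of $F$ which contains an interval, and then invoke the standard monotonicity of Assouad dimension under weak tangents, together with $F \subseteq \R$, to force $\dima F = 1$.

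\textbf{Case 1: the WSP holds.} The aim is Ahlfors regularity at $s = \dimh F$. The key step is a multiplicity bound: there exists a constant $M$ such that, for every $r \in (0,1)$ and every $x \in F$, the number of cylinders $S_\alpha(F)$ of diameter comparable to $r$ that meet $B_r(x)$ is at most $M$. This is precisely what WSP delivers, because two such cylinders of comparable diameter close to one another in position correspond to an element $S_\alpha^{-1}\circ S_\beta \in \mathcal{E}$ close to $I$; if no such $M$ existed, one would produce infinitely many elements of $\mathcal{E}\setminus\{I\}$ accumulating at $I$, contradicting WSP. The upper Ahlfors bound $\mathcal{H}^s(B_r(x)\cap F)\le b r^s$ then follows from the natural cylinder covering of $F\cap B_r(x)$, and the lower bound from the fact that $B_{2r}(x)$ contains at least one full cylinder of diameter comparable to $r$ (using that $F$ is non-trivial).

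\textbf{Case 2: the WSP fails.} Pick sequences $(\alpha_n),(\beta_n) \in \I^*$ with $\alpha_n \ne \beta_n$ and $T_n := S_{\alpha_n}^{-1}\circ S_{\beta_n} \to I$, $T_n \ne I$. Each $T_n$ is a similarity $x\mapsto c_n x + t_n$ of $\R$ with $c_n \to 1$ and $t_n\to 0$ (after passing to a subsequence to fix orientation). Because $S_{\alpha_n}(F)\subseteq F$ and $S_{\beta_n}(F)\subseteq F$, both $F$ and $T_n(F)$ lie inside the blow-up $S_{\alpha_n}^{-1}(F)$, so this blow-up already contains two nearly-coincident copies of $F$. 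By an iterative construction exploiting the near-identity elements, I would produce, for each $n$, a similarity $\phi_n$ with ratio $\to \infty$ (built by composing an inverse of a suitable long prefix $\gamma_n$ of iterates with further $S_i$'s) so that $\phi_n(F)$, intersected with a fixed compact window, contains $k_n \to \infty$ near-translated copies of $F$ whose pairwise spacings tend to $0$ while their total extent remains bounded below. Passing to a Hausdorff limit along a subsequence gives a weak tangent $\hat F$ of $F$ which contains a non-degenerate interval $J$. The standard inequality $\dima \hat F \le \dima F$ for weak tangents, combined with $\dima J = 1$ and $\dima F \le 1$, forces $\dima F = 1$.

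\textbf{Main obstacle.} The hardest step is the weak tangent construction in Case 2: one must make the near-identity iterates of $T_n$ produce an arithmetic-progression-like set that is dense in a non-trivial interval after rescaling, while simultaneously realising the union of these copies as a subset of a legitimate cylinder blow-up so that the limit is a genuine weak tangent of $F$. The case $c_n \ne 1$ is subtle, since the iterates then form a geometric-type rather than an arithmetic progression and one has to balance iterate count against window size to keep both the spacing vanishing and the total extent bounded below. The WSP case, by contrast, is conceptually routine once the multiplicity bound is isolated, and much of its content is essentially folklore given the references in the introduction.
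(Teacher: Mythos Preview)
Your overall strategy matches the paper's exactly: split on WSP, prove Ahlfors regularity in the WSP case, and build a one-dimensional weak tangent in the non-WSP case. Case~1 is essentially the paper's argument; the multiplicity bound you isolate is precisely what the paper extracts from Zerner's equivalent formulation of WSP. One point you pass over is that Ahlfors regularity needs $\mathcal{H}^s(F)>0$, which is itself a consequence of WSP (Zerner) rather than automatic---worth flagging but not a real gap.

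Case~2 is where your sketch diverges from the paper and where there is a genuine gap. You propose to iterate a single near-identity $T_n=S_{\alpha_n}^{-1}\circ S_{\beta_n}$ and worry about the case $c_n\ne 1$ producing geometric rather than arithmetic progressions. But the more serious problem is containment: you correctly note that $F$ and $T_n(F)$ both lie in $S_{\alpha_n}^{-1}(F)$, yet $T_n^2(F)$ need not, so iterating $T_n$ does not produce many copies inside a \emph{single} blow-up of $F$, and hence does not yield a weak tangent. The paper's mechanism is different and more delicate: it builds, inductively in $j$, expanding similarities $g_j$ (compositions of inverses of IFS maps) and contracting similarities $h_j$ (compositions of IFS maps), choosing a \emph{fresh} pair $(\alpha_{k_j},\beta_{k_j})$ and a power $m_j$ of a fixed $f=S_1^2$ at each step so that the increments $g_j\circ h_j(a)-g_{j-1}\circ h_{j-1}(a)$ are controlled in $[c\epsilon,3\epsilon]$. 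The crucial structural fact is that $\{a\}\cup\{g_j\circ h_j(a):j\le n\}\subseteq g_n(F)$, so all $n+1$ points live in one legitimate blow-up; this is what makes $[a,a+1]$ a weak (pseudo-)tangent. A second ingredient you do not mention is the need to localise near a fixed point $a$ where $\varphi_k=S_{\alpha_k}^{-1}\circ S_{\beta_k}-I$ is bounded away from zero (giving $\Delta_k\le 3\delta_k$); without this the increments cannot be controlled from below. Your ``balance iterate count against window size'' intuition is not the resolution the paper uses.
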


In the case of self-similar sets in $\R^d$ we obtain a less precise
dichotomy.

\begin{theorem}\label{mainRd}
  Let $F$ be a self-similar set in $\R^d$ that is not
  contained in any $(d-1)$-dimensional hyperplane.
  If the defining IFS for $F$ satisfies the weak separation
  property, then $\dima F = \dimh F$; otherwise, $\dima F \ge 1$.
\end{theorem}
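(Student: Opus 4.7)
I would split on whether the defining IFS satisfies the weak separation property (WSP), following the structure of Theorem~\ref{mainR}. The harder direction is the ``WSP fails'' case, where the key task is constructing a weak tangent of $F$ containing a line segment; the ``WSP holds'' case reduces to Ahlfors regularity.

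\textbf{Case 1 (WSP holds).} I would prove that $F$ is Ahlfors regular of dimension $s = \dimh F$, which gives $\dima F = \dimh F$. Concretely, choose a self-similar probability measure $\mu$ on $F$ and estimate $\mu(B_r(x))$ for $x \in F$ and small $r$. The upper bound $\mu(B_r(x)) \lesssim r^s$ is the standard covering computation. The lower bound $\mu(B_r(x)) \gtrsim r^s$ uses WSP through the fact that the number of cylinders $S_\alpha(F)$ of size comparable to $r$ that can meet $B_r(x)$ is uniformly bounded (Lau--Ngai \cite{Lau1999}), preventing the measure from becoming locally too sparse. The argument is dimension-free and carries over from the proof of Theorem~\ref{mainR} without modification.

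\textbf{Case 2 (WSP fails).} I would build a weak tangent $E$ of $F$ containing an interval, so that $\dima F \ge \dima E \ge 1$ by the monotonicity of Assouad dimension under weak tangents. Failure of WSP yields sequences $\alpha_n \neq \beta_n$ in $\I^*$ with $T_n := S_{\alpha_n}^{-1} \circ S_{\beta_n} \to I$ uniformly on $[0,1]^d$ and $T_n \neq I$; a pigeonhole argument on words of bounded length forces $c_{\alpha_n} \to 0$. The inclusion $S_{\alpha_n}^{-1}(F) \supseteq F \cup T_n(F)$ (from $S_{\alpha_n}(F), S_{\beta_n}(F) \subseteq F$) exhibits a magnified copy of $F$ containing two nearly coincident copies of $F$. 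Iterating with all length-$k$ words $\sigma \in \{\alpha_n, \beta_n\}^k$ produces $2^k$ cylinders $S_\sigma(F) \subseteq F$ of common size $\asymp c_{\alpha_n}^k$, each located near $S_{\alpha_n^k}(F)$ but translated by partial products of conjugates $S_{\alpha_n}^{-j} \circ T_n \circ S_{\alpha_n}^j$ for $0 \le j < k$. Tuning $k = k_n$ so that the typical translation magnitude matches the cylinder size gives $\asymp 2^{k_n}$ near-copies packed along a one-dimensional locus of length $\asymp c_{\alpha_n}^{k_n}$. Magnifying by $c_{\alpha_n}^{-k_n}$ about a suitable $x \in F$ and extracting a Hausdorff subsequential limit yields the weak tangent $E$, which by construction contains the limiting packed locus.

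\textbf{Main obstacle.} The delicate step is verifying that the final weak tangent actually contains an interval rather than collapsing to a copy of $F$ or to a point. One must simultaneously tune $k_n$ so that the packed cylinders are genuinely distinct at the chosen scale and show that the translations distribute along a genuinely one-dimensional direction rather than spiralling into a fixed point of some conjugate of $T_n$. The hypothesis that $F$ is not contained in any $(d-1)$-dimensional hyperplane is exactly what ensures there exists $x \in F$ for which the displacement $(T_n - I)(x)$ is non-degenerate, so that the packed copies trace out a genuine segment in the limit rather than degenerating within a lower-dimensional subspace.
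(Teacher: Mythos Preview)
Your split into the two cases and the target in each case match the paper exactly: when WSP holds, prove Ahlfors regularity; when it fails, build a weak tangent containing a segment.  In Case~1 you have essentially the paper's proof of Theorem~\ref{thmWSP}, with one harmless slip: with $\mathcal H^s$ (which is what the paper uses, invoking Zerner's result that WSP plus the hyperplane hypothesis gives $0<\mathcal H^s(F)<\infty$) it is the \emph{upper} bound $\mathcal H^s(B_r(x)\cap F)\lesssim r^s$ that requires the WSP bound on cylinder multiplicity, while the lower bound comes for free from a single cylinder inside the ball.

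Case~2 has a genuine gap.  Fixing one pair $(\alpha_n,\beta_n)$ and running over all $\sigma\in\{\alpha_n,\beta_n\}^k$ does \emph{not} produce copies of $F$ packed along a segment.  Writing $S_{\alpha_n}^{-k}S_\sigma$ as a product of conjugates $S_{\alpha_n}^{-(k-l)}T_n^{\epsilon_l}S_{\alpha_n}^{k-l}$, the resulting displacements are (in the rotation-free case) of the form $\sum_{j\in S} c_{\alpha_n}^{-j}\,v_n$ for $S\subseteq\{0,\dots,k-1\}$: a geometric, Cantor-type set of similarity ratio $c_{\alpha_n}$ and dimension $\log 2/|\log c_{\alpha_n}|$.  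Your own pigeonhole argument forces $c_{\alpha_n}\to 0$, so these dimensions tend to $0$ and no subsequential weak tangent need contain an interval.  The paper's construction is genuinely different: at step $j$ it chooses a \emph{fresh} pair $(\alpha_{k_j},\beta_{k_j})$ and an iterate $f^{m_j}$ of a map fixing a point $a$, both tuned so that the increment $g_j\circ h_j-g_{j-1}\circ h_{j-1}$ has norm between $c\epsilon$ and $3\epsilon$ for one fixed $\epsilon$.  This manufactures $n$ points of $T_n(F)$ in near-arithmetic progression, and sending $\epsilon\asymp 1/n\to 0$ gives a weak (pseudo-)tangent containing $[a,a+1]$.

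There is a second gap specific to $\R^d$.  Even granting an arithmetic-progression mechanism, the orthogonal parts $O_\alpha$ of the similarities cause the successive increments to rotate, so your ``one-dimensional locus'' is not automatic.  The paper handles this with two devices you do not mention: Lemma~\ref{lemG} shows $\overline{\{O_\alpha:\alpha\in\I^*\}}$ is a compact group, so at each step one can pre-compose by some $S_{\gamma_j}$, $\gamma_j\in\J$ from a fixed finite set, to approximately undo the accumulated rotation; and Lemma~\ref{findab} (which is where the hyperplane hypothesis really enters, via Lemma~\ref{fixpoints}) produces a fixed point $a$ and a radius $\rho$ on which $\sup\|\Phi_k\|\le 3\inf\|\Phi_k\|$, so that all increments can be trapped in a single narrow cone $\cone(B_\eta(w))$.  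Projecting onto $w$ then recovers the one-dimensional picture.  Your remark that one only needs some $x$ with $(T_n-I)(x)\ne 0$ understates what is required.
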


It is straightforward to see that our assumption that $F$ is not contained in a hyperplane is required.  For example, let $F$ be the middle third Cantor set, embedded in the natural way in $\mathbb{R}^3$, and view it as the attractor of an IFS consisting of three mappings, the first two being the expected maps (the natural extensions of the maps in the one dimensional IFS to $\mathbb{R}^3$) and the third one being one of the first two maps composed with an irrational rotation around the one dimensional subspace containing $F$.  Indeed, the WSP is not satisfied for this system, but $\dima F < 1$.  The problem is that the WSP \emph{is} satisfied when one restricts the defining maps to the one dimensional subspace containing $F$.  It is also easy to see that this condition is not restrictive at all because
one can always restrict to a linear space with the appropriate
dimension, i.e.\ where one cannot further restrict to any hyperplane,
and then apply Theorem \ref{mainRd}.


Theorems \ref{mainR} and
\ref{mainRd} may be broken into two parts.  The first case covers what
happens when the weak separation property holds and the second case
covers when the weak separation property fails.  Section \ref{WSPtrue}
treats the first case; Section \ref{WSPfalse} treats second case.
After proving our main results we explore some examples which
illustrate our theorems and in particular show that the `equality/maximal' dichotomy seen in Theorem \ref{mainR} does not extend to $\mathbb{R}^d$, i.e., there exist self-similar sets in $\mathbb{R}^d$ (not contained in any hyperplane) with Assouad dimension strictly between the Hausdorff dimension and the ambient spatial dimension. We close
with some remarks concerning how we might improve our
understanding of the Assouad dimension of a self-similar set
in $\R^d$ when the weak
separation property is not satisfied.

\section{Systems with the Weak Separation Property}\label{WSPtrue}

Our first result shows that the weak separation property implies that the
Hausdorff and Assouad dimensions coincide.

\begin{theorem} \label{thmWSP} Let $F$ be a self-similar set in $\R^d$
  not contained in any $(d-1)$-dimensional hyperplane.  If the defining IFS for $F$ satisfies the weak
  separation property, then $F$ is Ahlfors regular and,
  in particular, $ \dimh F = \dima F$.
\end{theorem}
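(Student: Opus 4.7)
The plan is to establish Ahlfors regularity of $F$ at exponent $s = \dimh F$; the equality $\dimh F = \dima F$ then follows from the fact, recalled in the introduction, that Ahlfors regular sets have coinciding Hausdorff and Assouad dimensions. For $r \in (0,1]$ write $\I_r = \{\alpha \in \I^* : c_\alpha \le r < c_{\overline{\alpha}}\}$ for the natural Moran stopping at scale $r$.

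The crux is the following counting estimate: there is a constant $N$, depending only on the IFS, such that for every $r \in (0,1]$ and every $x \in \R^d$,
\[
\#\,\bigl\{\,S_\alpha : \alpha \in \I_r,\ S_\alpha(F) \cap B_r(x) \neq \emptyset\,\bigr\} \le N.
\]
I would prove this by a compactness argument. After composing with the rescaling $y \mapsto (y-x)/r$, the admissible $S_\alpha$ become similarities with ratio in $[c_{\min}, 1]$ and image meeting the unit ball; such maps are uniformly bounded and equi-Lipschitz on $[0,1]^d$, so the family of rescaled maps is precompact in $C([0,1]^d, \R^d)$ by Arzela--Ascoli. The WSP furnishes a $\delta > 0$ with $\|T - I\|_\infty \ge \delta$ for every $T \in \mathcal{E} \setminus \{I\}$. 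Fix $\epsilon = \delta c_{\min}/2$ and let $N(\epsilon)$ be the covering number of the precompact family at scale $\epsilon$. If the displayed count exceeded $N(\epsilon)$, pigeonholing would yield distinct $S_\alpha, S_\beta$ whose rescaled versions agree to within $\epsilon$, and then $\|S_\alpha - S_\beta\|_\infty \le r\epsilon$ on $[0,1]^d$. Composing with $S_\alpha^{-1}$ (Lipschitz constant $1/c_\alpha$) and using $c_\alpha > c_{\min}\,r$ yields $\|S_\alpha^{-1}\circ S_\beta - I\|_\infty \le \delta/2 < \delta$, contradicting the WSP gap.

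Once the counting estimate is in hand, Ahlfors regularity is essentially routine. For the upper bound, $F \cap B_r(x) \subseteq \bigcup_\alpha S_\alpha(F)$ as a union of at most $N$ distinct cylinders of ratio at most $r$, so subadditivity of $\mathcal{H}^s$ gives $\mathcal{H}^s(F \cap B_r(x)) \le N r^s \mathcal{H}^s(F)$. For the lower bound, each $x \in F$ lies in some cylinder $S_\alpha(F)$ with $\alpha \in \I_r$, and that cylinder has diameter at most $r\,\diam F$, so $S_\alpha(F) \subseteq \overline{B_{r\,\diam F}(x)}$ and $\mathcal{H}^s(F \cap \overline{B_{r\,\diam F}(x)}) \ge (c_{\min} r)^s \mathcal{H}^s(F)$, which rescales to the desired lower Ahlfors bound. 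Both estimates require $0 < \mathcal{H}^s(F) < \infty$: finiteness is immediate from the counting estimate, and positivity under WSP follows from a Zerner-type mass distribution argument on the symbolic space in which the overlap multiplicities are again controlled by the same counting estimate.

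The main obstacle is the counting estimate itself, which is where the qualitative WSP is converted into a quantitative overlap bound via compactness; the short inverse computation is delicate in that it crucially uses the lower bound $c_\alpha > c_{\min}\,r$ coming from the Moran truncation. A secondary technical point is establishing $\mathcal{H}^s(F) > 0$, which relies on the standard WSP-compatible construction of a Frostman-type measure rather than on an argument from first principles.
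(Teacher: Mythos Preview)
Your overall architecture matches the paper's: establish Ahlfors regularity at $s=\dimh F$ via (i) a uniform bound on the number of scale-$r$ cylinders meeting any $r$-ball, (ii) the standard lower bound from a single cylinder through $x$, and (iii) the fact that $0<\mathcal{H}^s(F)<\infty$, with positivity drawn from Zerner. The one genuine difference is how you obtain the counting estimate. The paper quotes Zerner's equivalence theorem directly---WSP is equivalent to a uniform bound $l(y)$ on the number of points of $\{S_\alpha(S_\beta(y)):\alpha\in\I_r\}$ lying in any $r$-ball---and then uses the doubling property of $\R^d$ to pass from a ball of radius $r(1+|F|)$ to balls of radius $r$. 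You instead rescale by $y\mapsto(y-x)/r$, observe that the resulting similarities lie in a fixed compact family independent of $x$ and $r$, and pigeonhole against a finite $\epsilon$-net to force two distinct maps with $\|S_\alpha^{-1}\circ S_\beta-I\|_\infty<\delta$, contradicting the WSP gap. Your route is more self-contained, in effect reproving the relevant implication of Zerner's equivalence from scratch; the paper's is shorter by citation. One minor correction: finiteness of $\mathcal{H}^s(F)$ is not ``immediate from the counting estimate,'' since you have not identified $s=\dimh F$ with any explicitly computable exponent; like the paper, you are really invoking the general fact that any self-similar set has finite Hausdorff measure in its critical dimension. Finally, note that the hyperplane hypothesis is nowhere used in your counting argument; as in the paper, it enters only through Zerner's positivity result.
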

 Before beginning our proof we introduce the notations
  \begin{equation}\label{Iris}
  \I_r = \big\{ \alpha \in \I^* : c_\alpha \leq r <
  c_{\overline{\alpha}} \big\}
  \end{equation}
  and
  \[
  \I_r(x) = \big\{ \alpha \in \I_r : B_r(x) \cap S_\alpha(F) \neq
  \emptyset \big\}.
  \]
Note that for $r \in (0,1)$ we have
$$
	F=\bigcup_{\alpha \in\I_r} S_\alpha(F)
\words{and}
	B_r(x)\cap F\subseteq
	\bigcup_{\alpha \in\I_r(x)} S_\alpha(F).
$$

We remark that the common value for the Hausdorff and Assouad
dimensions given by Theorem \ref{thmWSP} may be strictly less than the
similarity dimension. This happens when exact overlaps allow certain
compositions of maps to be deleted.  In fact, if one takes the infimum
over all such `reduced IFSs', then Zerner \cite[Theorem 2]{Zerner1996}
shows that this gives the common value.
In particular, if the WSP is
satisfied and $F$ is not contained in a hyperplane, then
\[
\dimh F= \dima F=\lim_{ r \to 0} \, \dims(\{S_\alpha : \alpha
  \in \I_r\}) = \inf_{ r \in (0,1)} \, \dims(\{S_\alpha : \alpha
  \in \I_r\}) .
\]
To see why this number drops from the original similarity dimension if
there are exact overlaps, observe that $\{S_\alpha : \alpha
  \in \I_r\}$ is a \emph{set} of maps
rather than a \emph{multiset}, i.e.\ repeated maps are included only
once.  Zerner \cite[Proposition 2]{Zerner1996} also shows that the
number on the right of the above equation is equal to the original
similarity dimension if and only if the OSC is satisfied.  The problem
is that this value may be very difficult to compute.  Ngai and Wang
\cite{Ngai2001} introduced the \emph{finite type condition} which is
strictly stronger than the WSP (see Nguyen \cite{Nguyen2002}),
but allows the above value to be computed explicitly.

\begin{proof}[Proof of Theorem \ref{thmWSP}.]
  \label{thmWSPproof}

  Let $F$ be a self-similar set in $\R^d$ not contained in any
  $(d-1)$-dimensional hyperplane and assume that the weak separation
  property is satisfied.   Also, without loss of generality, assume that $0 \in F$.  Let $c_{\min}=\min\{\, c_i : i\in\I\,\}$ and write $\lvert F \rvert$ for the diameter of $F$.
To prove the result, it suffices to show
  that $F$ is Ahlfors regular.  Let $s = \dimh F$ denote the Hausdorff
  dimension of $F$.  First observe that the weak separation property
  implies that $F$ is an $s$-set, i.e.
  \[
  0 < \mathcal{H}^s (F) < \infty,
  \]
  which is a necessary condition for Ahlfors regularity.  Indeed, any
  self-similar set has finite Hausdorff measure in the critical
  dimension and Zerner shows that, together with $F$ not being
  contained in a hyperplane, the weak separation property implies that
  this measure is also positive \cite[Corollary to Proposition
  2]{Zerner1996}.  Let $x \in F$ and $r \in (0, \min\{1, \lvert F \rvert\})$ be arbitrary.  The lower
  bound in the definition of Ahlfors regularity is easy to establish.  Choose $\alpha \in \I^*$ such that
  \[
  x \in S_\alpha(F) \words{and} c_\alpha \leq r / \lvert F \rvert <
  c_{\overline{\alpha}}.
  \]
  It is clear that such an $\alpha \in \I^*$ exists, that it satisfies
  $c_\alpha > c_{\min} r/\lvert F \rvert$ and that $S_\alpha(F)
  \subseteq B_r(x)$.  As such, using the scaling property for Hausdorff
  measure,
  \[
  \mathcal{H}^s \big(B_r(x) \cap F \big) \geq \mathcal{H}^s \big(
  S_\alpha(F) \big) \geq c_\alpha^s \, \mathcal{H}^s (F) \geq \ \frac{
    c_{\min}^s \, \mathcal{H}^s (F) }{\lvert F \rvert^s} \, r^s.
  \]
  The upper bound is more awkward to establish and relies on the weak
  separation property.
  Observe that
  \begin{plain}$$\eqalign{ \mathcal{H}^s \big(B_r(x) \cap F \big) &
      \leq \mathcal{H}^s \bigg( \bigcup_{\alpha \in \I_r(x) }
      S_\alpha(F) \bigg) \cr &\leq \ \sum_{\alpha \in \I_r(x) }
      c_\alpha^s \, \mathcal{H}^s (F) \cr &\leq \ \mathcal{H}^s (F) \,
      \lvert \I_r(x) \rvert \, r^s .  }$$\end{plain}%
  Thus in order to finish the proof, it suffices to bound $\lvert
  \I_r(x) \rvert$ independently of $x$ and $r$.  Of course, if the
  defining system has complicated overlaps, this is impossible, but we
  can prove it in our case by applying one of the equivalent
  formulations of the weak separation property given by Zerner.  In
  particular, \cite[Theorem 1 (3a), (5a)]{Zerner1996} shows the
  definition we use in this paper is equivalent to the following (stated in our notation): for all $y \in \R^d$, there exists
  $l(y) \in \mathbb{N}$ such that for any $\beta \in \I^*$ and $r>0$,
  every ball with radius $r$ contains at most $l(y)$ elements of the
  set
  \[
  \{ S_\alpha (S_\beta(y)) : \alpha \in \I_r \}.
  \]
  Set $y=0$ and choose $\beta \in \I$ arbitrarily and observe that if $\alpha \in \I_r(x)$, then
\[
S_\alpha(S_\beta(0)) \in S_\alpha(F) \subseteq B(x, r+r\lvert F \rvert).
\]
Since $\mathbb{R}^d$ is a doubling metric space we can find a cover of $ B(x, r+r\lvert F \rvert)$ by fewer than $L$ balls of radius $r$, where $L$ is a uniform constant independent of $x$ and $r$.  Each of these $r$-balls can contain no more than $l(0)$ of the points $\{S_\alpha(S_\beta(0))  :  \alpha \in \I_r \}$ and so we deduce that $\lvert  \I_r(x) \rvert \leq l(0) \, L$, completing the proof.
\end{proof}

\section{Systems without the Weak Separation Property}\label{WSPfalse}

Our second result shows that if the weak separation property does not
hold, then the Assouad dimension is bounded below by one.  On the real
line this gives a set of maximal dimension.

\begin{theorem}\label{thmnotWSP}
  Let $F$ be a non-trivial self-similar set in $\R$.  If the defining IFS for $F$ does not satisfy the weak
  separation property, then $ \dima F = 1.$
\end{theorem}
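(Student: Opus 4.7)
Since $F\subseteq\R$ we have $\dima F\leq 1$ automatically, so the content of the theorem is the lower bound $\dima F\geq 1$. The plan is to produce a \emph{weak tangent} $T$ of $F$ that contains a non-degenerate interval; combined with the standard monotonicity $\dima T\leq \dima F$ for weak tangents, together with $\dima[0,1]=1$, this yields the required bound.

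From the failure of WSP, extract pairs $\alpha_n,\beta_n\in\I^*$ with $\alpha_n\neq\beta_n$ and $T_n:=S_{\alpha_n}^{-1}\circ S_{\beta_n}\to I$ uniformly on $[0,1]$ with each $T_n\neq I$. Writing $T_n(x)=a_nx+b_n$, we have $a_n\to 1$ (hence eventually $a_n>0$) and $b_n\to 0$, with $(a_n,b_n)\neq (1,0)$. A short pigeonhole argument shows that $c_{\alpha_n}\to 0$: otherwise, on a subsequence where $c_{\alpha_n}$ and $c_{\beta_n}$ stay bounded below there would be only finitely many candidates for $(\alpha_n,\beta_n)$, contradicting the nontrivial convergence $T_n\to I$. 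Thus $S_{\alpha_n}(F)$ and $S_{\beta_n}(F)$ are a pair of highly overlapping small copies of $F$ inside $F$, with $S_{\beta_n}(F)=S_{\alpha_n}(T_n(F))$ a near-identity perturbation of $S_{\alpha_n}(F)$.

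For each $n$ pick a base point $x_n\in F$ of the form $x_n=S_{\alpha_n}(x_0)$ for some fixed $x_0\in F$, and consider the rescaling similarity $U_n(x)=(x-x_n)/c_{\alpha_n}$. A direct calculation gives $U_n(S_{\alpha_n}(F))=F-x_0$ and $U_n(S_{\beta_n}(F))=(F-x_0)+b_n$, so $U_n(F)\cap[0,1]$ already contains (essentially) two translated copies of $F$. The crucial point is that it contains many \emph{additional} small copies clustered near the origin: composing the pair $(\alpha_n,\beta_n)$ on the right with an arbitrary word $\mu\in\I^*$ gives a further pair $(\alpha_n\mu,\beta_n\mu)$ whose images inside $F$ differ by essentially the same translation $\tau_n:=c_{\alpha_n}b_n$ (up to a correction of order $|a_n-1|$, which is negligible as $a_n\to 1$), and iterating by considering length-$k$ words in $\{\alpha_n,\beta_n\}$ produces $2^k$ copies whose offsets spread throughout an interval of length $\approx\tau_n/(1-c_{\alpha_n})$. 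For a suitable choice of $k_n\to\infty$ and a minor adjustment of $U_n$, the positions of these rescaled copies become dense in $[0,1]$ in the limit. Passing to a Hausdorff-convergent subsequence (by compactness of the hyperspace of compact subsets of $[0,1]$), we obtain a weak tangent $T\supseteq[0,1]$, and $\dima F\geq\dima T=1$.

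The main technical obstacle is verifying the density claim. The finite-stage offset set $\bigl\{\tau_n\sum_{j=1}^{k}\delta_jc_{\alpha_n}^{j-1}:\delta\in\{0,1\}^k\bigr\}$ produced by iterating the pair $\{\alpha_n,\beta_n\}$ alone tends, as $k\to\infty$, to the attractor of an auxiliary two-map similarity IFS with contraction $c_{\alpha_n}$; since $c_{\alpha_n}\to 0$ this auxiliary attractor is a Cantor set (not an interval) for all large $n$, so density cannot come from iterating this single pair in isolation. The delicate point is to combine the $T_n$-iterations with further compositions drawn from the full IFS $\{S_i\}_{i\in\I}$ (thereby exploiting the nontriviality of $F$, i.e., the abundance of scales available within $F$) in such a way that the gaps are filled. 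Carrying out this combination carefully is, I expect, the crux of the proof.
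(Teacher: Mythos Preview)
Your overall plan---construct a weak tangent of $F$ containing an interval---is exactly the paper's strategy, and your opening reductions (passing to $T_n\to I$ with $T_n\ne I$, noting $a_n\to 1$) are fine. But the proposal stops short of a proof: you correctly diagnose that iterating a \emph{single} pair $(\alpha_n,\beta_n)$ yields a Cantor-type offset set rather than a dense one, and you then say only that ``combining'' with further IFS maps should fill the gaps. That combination is the entire content of the argument, and nothing you have written indicates how to do it. In particular, your offset formula $\tau_n\sum_j \delta_j c_{\alpha_n}^{\,j-1}$ (the attractor of a two-map IFS with ratio $c_{\alpha_n}$) has gap-to-piece ratio $(1-2c_{\alpha_n})/c_{\alpha_n}\to\infty$ since $c_{\alpha_n}\to 0$, so the gaps are not small perturbations to be patched---they dominate, and an essentially new idea is needed.

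The paper supplies that idea. Fix a map $f$ in the semigroup with $f(a)=a$ and positive derivative $c\in(0,1)$, and observe the conjugation identity
\[
\bigl(f^{-m}\circ S_{\alpha_k}^{-1}\circ S_{\beta_k}\circ f^{m}-I\bigr)(x)=c^{-m}\,\varphi_k\bigl(f^m(x)\bigr),
\qquad \varphi_k:=S_{\alpha_k}^{-1}\circ S_{\beta_k}-I.
\]
Thus conjugating by $f^m$ multiplies the (one-signed, bounded-ratio) displacement $\varphi_k$ by $c^{-m}$. Because $\delta_k:=\inf_{B_\rho(a)}\varphi_k\to 0$ while $c^{-m}\to\infty$, for \emph{any} target $\epsilon>0$ one can choose $(k,m)$ so that the conjugated displacement lies in $[c\epsilon,3\epsilon]$ on $F$. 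The proof then builds, inductively in $j$, expanding similarities $g_j$ and contracting words $h_j$ so that $g_j\circ h_j(a)-g_{j-1}\circ h_{j-1}(a)\in[c\epsilon,3\epsilon]$ for every $j$; all the points $g_j\circ h_j(a)$ lie in a single blow-up $g_n(F)$, and letting $\epsilon=\epsilon_n\to 0$ produces a sequence of blow-ups $T_n(F)$ with $p_{\mathcal H}([a,a+1],T_n(F))\to 0$. Two further devices you did not anticipate: the paper arranges $\varphi_k>0$ on a fixed ball $B_{2\rho}(a)$ (so the increments are one-signed with $\Delta_k\le 3\delta_k$), and it uses a \emph{weak pseudo-tangent} (only the one-sided Hausdorff distance $p_{\mathcal H}$), which is exactly what the construction delivers and suffices for the lower bound on $\dima$.

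In short, your sketch identifies the destination but not the vehicle; the missing mechanism is the $f^m$-conjugation that converts a single null sequence of displacements $\delta_k$ into displacements of \emph{any prescribed size}, which is what makes the increments uniformly $\epsilon$-spaced rather than geometrically thinning.
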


In $\R^d$ we can only show that the dimension is at least one.

\begin{theorem}\label{thmnotWSPRd}
  Let $F$ be a self-similar set in $\R^d$ not contained in
  any $(d-1)$-dimensional hyperplane.  If the defining IFS for $F$ does not satisfy the weak
  separation property, then $ \dima F \ge 1.$
\end{theorem}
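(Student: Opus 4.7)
My plan is to produce a weak tangent of $F$ that contains a line segment.  Recall that a weak tangent of $F$ is a nonempty Hausdorff limit of sets $U_n(F) \cap \overline{B(0,1)}$ for similarities $U_n$ with expansion ratios tending to infinity, and that the Assouad dimension of any such tangent is bounded above by that of $F$ (see Mackay and Tyson \cite{Mackay2010}).  Since a line segment in $\R^d$ has Assouad dimension $1$, it suffices to construct a weak tangent of $F$ containing a line segment.

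Failure of the weak separation property gives similarities $T_n = S_{\alpha_n}^{-1} \circ S_{\beta_n} \in \mathcal{E}$ with $T_n \neq I$ and $T_n \to I$ uniformly on $[0,1]^d$.  Parameterising $T_n(x) = c_n R_n x + t_n$, we have $c_n \to 1$, $R_n \to I$, and $t_n \to 0$, with at least one convergence nontrivial.  For a fixed $y \in F$ and each binary word $\epsilon = (\epsilon_1,\dots,\epsilon_k) \in \{0,1\}^k$, the composition $W_\epsilon^{(n)}$ of $k$ maps chosen from $\{S_{\alpha_n}, S_{\beta_n}\}$ according to $\epsilon$ sends $F$ into $F$, so $W_\epsilon^{(n)}(y) \in F$.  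Using $S_{\beta_n} = S_{\alpha_n} \circ T_n$ and induction, one shows (in the translation-dominant regime, with vanishing corrections otherwise)
\[
S_{\alpha_n}^{-k} \circ W_\epsilon^{(n)}(y) \;=\; y \;+\; \sum_{j=1}^{k} \epsilon_j\, c_{\alpha_n}^{j-k} R_{\alpha_n}^{j-k}\, t_n,
\]
where $R_{\alpha_n}$ is the orthogonal part of $S_{\alpha_n}$.  Choosing $k = k_n$ so that $c_{\alpha_n}^{1-k_n}\|t_n\|$ is of order one, the $2^{k_n}$ Bernoulli-type sums above lie in a bounded neighbourhood of $y$.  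Passing to a subsequence, the Hausdorff limit of $S_{\alpha_n}^{-k_n}(F) \cap \overline{B(y,r)}$ yields a weak tangent $X$ of $F$ containing every accumulation point of these sums.

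The principal obstacle is to show that $X$ contains a line segment.  The summand magnitudes $|c_{\alpha_n}^{j-k_n} t_n|$ form a geometric sequence with ratio $c_{\alpha_n}$.  In the one-dimensional setting (Theorem~\ref{thmnotWSP}), the absence of nontrivial rotations forces all summands to be collinear, and for $c_{\alpha_n} \ge 1/2$ the Bernoulli convolution structure fills a line segment densely; when $c_{\alpha_n} < 1/2$ one enriches the family of near-identity maps by conjugates $S_\gamma^{-1} T_n S_\gamma \in \overline{\mathcal{E}\setminus\{I\}}$ at additional scales to refine the spacing.  In higher dimensions the orthogonal parts $R_{\alpha_n}^{j-k_n}$ can rotate the summands off any single line, producing a logarithmic-spiral pattern; handling this requires passing to further subsequences on which the rotations stabilize, and extracting a one-dimensional substructure along a common limiting direction.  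This step is precisely where the loss of rotational control in dimensions $d \ge 2$ prevents us from concluding that $\dima F$ attains the maximal value $d$, leaving only the weaker bound $\dima F \ge 1$.
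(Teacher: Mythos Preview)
Your high-level strategy---produce a weak tangent of $F$ containing a line segment---matches the paper's.  But the specific mechanism you propose has a genuine gap precisely at the point you yourself flag: the control of rotations.

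Your Bernoulli-sum construction hinges on the summands $c_{\alpha_n}^{j-k_n} R_{\alpha_n}^{j-k_n} t_n$ being nearly collinear.  In dimension $d\ge 2$ there is no reason for this: $R_{\alpha_n}$ may be an irrational rotation, in which case the vectors $R_{\alpha_n}^{j-k_n} t_n$ equidistribute on a circle (or higher-dimensional torus orbit) as $j$ varies.  You write that this ``requires passing to further subsequences on which the rotations stabilize,'' but no such subsequence argument is available here.  Restricting to those $j$ for which $R_{\alpha_n}^{j-k_n}$ is close to the identity creates gaps in the geometric progression $c_{\alpha_n}^{j-k_n}$ of order $c_{\alpha_n}^{\tau}$, where $\tau$ is the return time of the rotation; since $c_{\alpha_n}$ is a product of contraction ratios along a word of possibly large length, this ratio can be arbitrarily small, destroying any interval structure.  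The same problem already appears in your one-dimensional sketch when $c_{\alpha_n}<1/2$, and your remark about ``enriching with conjugates $S_\gamma^{-1}T_nS_\gamma$'' does not explain how the enriched family assembles into a segment.

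The paper resolves this with a different construction.  Rather than iterating a single pair $(S_{\alpha_n},S_{\beta_n})$, it fixes a map $f=S_\gamma$ with a chosen fixed point $a$ (found via the hyperplane hypothesis, Lemma~\ref{fixpoints} and Lemma~\ref{findab}) and builds maps $g_j,h_j$ inductively, inserting at each stage both a power $f^{m_j}$ to tune the scale and, crucially, a correction map $S_{\gamma_j}$ chosen from a \emph{finite} set $\mathcal{J}\subset\mathcal{I}^*$ so that the accumulated orthogonal part $W_{j-1}O_{\gamma_j}^{-1}O_\gamma^{-m_j}$ is within a fixed $\epsilon_2$ of the identity.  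The existence of such a finite correcting family is Lemma~\ref{lemG}, which uses that the closure $G$ of the orthogonal parts is a compact group.  This forces every increment $g_j\circ h_j(a)-g_{j-1}\circ h_{j-1}(a)$ to lie in a fixed cone $\mathcal{C}(B_\eta(w))$, after which projection onto $w$ yields the desired interval.  This rotation-cancelling device is the heart of the higher-dimensional argument and is absent from your proposal; without it, or an equivalent idea, the proof does not go through.
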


Theorem~\ref{thmnotWSP} is the special case of
Theorem~\ref{thmnotWSPRd} when $d=1$, however, for clarity of
exposition we give a separate proof of Theorem~\ref{thmnotWSP} first.  Another reason for doing this is that Theorem~\ref{mainR} is the main result of the paper as it provides a precise result in $\R$ and so it is expedient to give a clear proof of Theorem~\ref{thmnotWSP} without the extra technical details required in $\R^d$.
Note that combining Theorem~\ref{thmWSP} with Theorem~\ref{thmnotWSP}
yields Theorem~\ref{mainR} which provides a precise dichotomy for
self-similar sets in the line.  Combining Theorem~\ref{thmWSP} with
Theorem \ref{thmnotWSPRd} yields Theorem~\ref{mainRd}.

One of the most powerful techniques for proving lower bounds for
Assouad dimension is to construct weak tangents to the set.  This
approach has been pioneered by Mackay and Tyson \cite{Mackay2010}, but also harks back to Furstenberg's notion of \emph{microsets} \cite{furstenberg}.  We first recall the
definition of the Hausdorff distance and then give the definition of
weak tangent from \cite{Mackay2010}, see also \cite{Mackay2011}.

\begin{definition}
  Given compact subsets $X$ and $Y$ of $\R^d$, the Hausdorff distance
  between $X$ and $Y$ is defined as
$$
\dh(X,Y)=\max\big\{\ph(X,Y),\ph(Y,X)\big\}$$ where
$$
\ph(X,Y)=\sup_{x\in X} \inf_{y\in Y} \|x-y\|.
$$
\end{definition}

\begin{definition}\label{weaktang}
  Let $X$ be a compact subset of $\R^d$ and let $F$ and $\hat F$ be compact subsets of $X$.  We say that $\hat F$
  is a weak tangent of $F$ if there exists a sequence of similarity maps
  $T_k\colon\R^d\to\R^d$ such that $d_{\cal H}(T_k(F) \cap X,
  \hat{F})\to 0$ as $k\to\infty$.
\end{definition}

\begin{proposition}\label{lowerbound}
  If $\hat F$ is a weak tangent of $F$ then $\dima \hat{F} \leq \dima
  F$.
\end{proposition}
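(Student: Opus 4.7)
The plan is to show that for every $s>\dima F$ one also has $\dima\hat F\le s$, after which taking the infimum over such $s$ gives the result. Fix such an $s$ with a constant $K_s$ so that $\cover_F(r,\rho)\le K_s(r/\rho)^s$ for all $0<\rho<r\le 1$. A direct change of variables shows that this covering bound is preserved (with the same $K_s$) under any similarity: if $T$ has ratio $c$, then $T(F)\cap B_r(x)$ is an isometric rescaling of $F\cap B_{r/c}(T^{-1}(x))$. Since for bounded subsets of $X$ the restriction $r\le 1$ is immaterial, each set $T_k(F)\cap X\subseteq T_k(F)$ satisfies $\cover_{T_k(F)\cap X}(r,\rho)\le K_s(r/\rho)^s$, uniformly in $k$.

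The key step is to transfer this uniform bound to $\hat F$ via the Hausdorff convergence $\dh(T_k(F)\cap X,\hat F)\to 0$. Fix $\hat x\in\hat F$, $0<\rho<r\le 1$, and choose $\epsilon\in(0,\rho/4)$. For all sufficiently large $k$ there exists $y\in T_k(F)\cap X$ with $\|y-\hat x\|<\epsilon$, and every $z\in\hat F$ lies within $\epsilon$ of some $w\in T_k(F)\cap X$. If $z\in B_r(\hat x)$, the triangle inequality gives $\|w-y\|<r+2\epsilon$, so $w\in(T_k(F)\cap X)\cap B_{r+2\epsilon}(y)$. I cover this latter set by at most $N:=K_s\bigl((r+2\epsilon)/(\rho-2\epsilon)\bigr)^s$ balls of radius $\rho-2\epsilon$; enlarging each of these balls by $\epsilon$ yields a cover of $\hat F\cap B_r(\hat x)$ by $N$ balls of radius $\rho-\epsilon<\rho$, because if $u$ is the centre of a ball containing $w$, then $\|u-z\|\le\|u-w\|+\|w-z\|<\rho-\epsilon$.

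Thus $\cover_{\hat F}(r,\rho)\le K_s\bigl((r+2\epsilon)/(\rho-2\epsilon)\bigr)^s$ for every admissible $\epsilon$, and letting $\epsilon\to 0$ gives $\cover_{\hat F}(r,\rho)\le K_s(r/\rho)^s$. Hence $\dima\hat F\le s$, and since $s>\dima F$ was arbitrary, $\dima\hat F\le\dima F$. There is no substantial obstacle; the proof is mainly careful bookkeeping of how an $\epsilon$-Hausdorff approximation perturbs the two scales appearing in the Assouad covering condition, combined with the elementary fact that the covering bound is preserved by similarities and by passage to subsets.
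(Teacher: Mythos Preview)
Your proof is correct and follows essentially the same approach as the paper: the paper does not prove Proposition~\ref{lowerbound} directly but cites Mackay--Tyson and notes it follows from Proposition~\ref{wpseudotangdim}, whose proof uses exactly the same two ingredients (invariance of the Assouad covering bound under similarities, and transferring the bound across Hausdorff convergence by perturbing the scales). The only cosmetic difference is that the paper fixes the perturbation at $\rho/4$ and obtains the constant $K_s 4^s$, whereas you introduce a parameter $\epsilon$ and let $\epsilon\to 0$ to recover the same constant $K_s$; both are sufficient for the dimension inequality.
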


The proof of Proposition \ref{lowerbound} can be found in
\cite[Proposition 6.1.5]{Mackay2010} and \cite[Proposition
2.1]{Mackay2011}. In practise, the similarity maps $T_k$ in Definition \ref{weaktang} are usually taken to be expanding with expansion ratios tending to infinity with $k$.  The result is still true if the similarity maps are contracting with contraction ratios tending to zero, but then the limit set $\hat F$ is often referred to as an \emph{asymptotic cone}, rather than a tangent.  In our work we shall employ the following slightly weaker notion of a
tangent than the weak tangent of Definition~\ref{weaktang}.

\begin{definition}\label{pseudotang}
  Let $F$ and $\hat F$ be compact subsets of $\R^d$.  We say $\hat F$
  is a weak pseudo-tangent of $F$ if there exists a sequence of
  similarity maps $T_k\colon\R^d\to\R^d$ such that
	$\ph(\hat{F},T_k(F))\to 0$ as $k\to\infty$.
\end{definition}

We choose the name weak pseudo-tangent because the main difference
between Definition \ref{pseudotang} and Definition \ref{weaktang} is
the replacement of the Hausdorff metric $\dh$ with the one-sided
Hausdorff pseudo-metric $\ph$.  Note that we also do not need to take
the intersection with some explicitly chosen set $X$ in the definition
of the weak pseudo-tangent.  Fortunately for our analysis the Assouad
dimension of a pseudo-tangent still forms a lower bound on the Assouad
dimension of the original set.

\begin{proposition} \label{wpseudotangdim} If $\hat F$ is a weak
  pseudo-tangent of $F$ then $\dima \hat{F} \leq \dima F$.
\end{proposition}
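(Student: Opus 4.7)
The plan is to adapt the proof of Proposition~\ref{lowerbound}, incorporating an asymmetric inflation argument to handle the fact that we only control the one-sided pseudo-distance $\ph(\hat F, T_k(F))$. I will fix an arbitrary $s > \dima F$ and prove $\dima \hat F \le s$; taking the infimum over such $s$ then yields the stated inequality.

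The first step is to observe that the defining inequality of the Assouad dimension is scale invariant: since each $T_k$ is a similarity, the constant $K_s$ witnessing $\cover_F(r,\rho) \le K_s(r/\rho)^s$ also works uniformly for every $T_k(F)$. The ostensible restriction $r \le 1$ is harmless here because each $T_k(F)$ is bounded, and the inequality extends to all $0 < \rho < r$ with at most a diameter-dependent loss absorbed into the constant. I will apply this extended bound at the scales relevant to the argument below.

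Next, fix $x \in \hat F$ and $0 < \rho < r \le 1$, and choose $k$ large enough that $\ph(\hat F, T_k(F)) < \rho/3$. In particular there exists $x' \in T_k(F)$ with $\|x-x'\| < \rho/3$. For every $y \in \hat F \cap B_r(x)$, pick $y' \in T_k(F)$ within $\rho/3$ of $y$; the triangle inequality places $y' \in T_k(F) \cap B_{r+\rho}(x')$. I then cover this latter set by at most $K_s\bigl(3(r+\rho)/\rho\bigr)^s \le K_s\, 6^s (r/\rho)^s$ balls of radius $\rho/3$ and inflate each to radius $\rho$. By construction every $y \in \hat F \cap B_r(x)$ lies in one of these inflated balls, so $\cover_{\hat F}(r,\rho) \le K_s\, 6^s (r/\rho)^s$ after taking the supremum over $x \in \hat F$. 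Letting $s \downarrow \dima F$ gives $\dima \hat F \le \dima F$.

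The main obstacle is precisely the one-sidedness of $\ph$: because $T_k(F)$ need not be close to $\hat F$ in the reverse direction, one cannot transfer a cover of $T_k(F)$ at scale $\rho$ directly into a cover of $\hat F$ at the same scale. The remedy built into the argument is to cover on the $T_k(F)$ side at the finer scale $\rho/3$ and then inflate by the pseudo-distance; this costs only the multiplicative constant $6^s$, which is independent of $r$ and $\rho$ and therefore harmless for the Assouad dimension estimate.
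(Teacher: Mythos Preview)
Your proof is correct and follows essentially the same approach as the paper's: fix $s>\dima F$, transfer the covering bound to each $T_k(F)$ by scale invariance of similarities, approximate points of $\hat F$ by nearby points of $T_k(F)$, cover the relevant ball in $T_k(F)$ at a finer scale, and inflate back to radius $\rho$. The only differences are cosmetic constants --- the paper takes $\ph\le\rho/4$, restricts to $r<1/2$, doubles to $B_{2r}(y)$ and covers at scale $\rho/2$ to obtain the factor $4^s$, whereas you take $\ph<\rho/3$, enlarge to $B_{r+\rho}(x')$ and cover at scale $\rho/3$ to obtain $6^s$.
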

The proof of Proposition \ref{wpseudotangdim} follows the proofs in
the weak-tangent setting given by Mackay and Tyson, but we include the
details for completeness.  In fact our proof is slightly simpler since
we do not have to take intersections with some previously chosen $X$
and we need only control `one side' of the
convergence.  Moreover, Proposition \ref{lowerbound} follows from
Proposition \ref{wpseudotangdim} by choosing an appropriate reference set $X$.

\begin{proof}
  Let $s>\dima F$.  By definition there exists $K_s$ such that
$$
\cover_F(r,\rho)\le K_s (r/\rho)^s \words{for all} 0<\rho<r\le 1.$$
Since $\hat F$ is a weak pseudo-tangent there exist similarities
$T_k$ with corresponding contraction ratios $u_k$  such that
$$
\ph(\hat{F},T_k(F))\to 0 \words{as} k\to\infty.
$$
Since any cover of an $r$-ball in $T_k(F)$ by $\rho$-balls gives rise to a cover of a corresponding $r/u_k$-ball in $F$ by the same number of $\rho/u_k$-balls, and vice-versa,
it follows that
$$
\cover_{T_k(F)}(r,\rho) =\cover_{F}(r/u_k,\rho/u_k) \le K_s (r/\rho)^s
\words{for all} 0<\rho<r\le 1.$$
Note that $K_s$ is independent of
$k$.  Given $r \in (0, 1/2)$ and $\rho \in (0,r)$ choose $k$ so large that
$$
\ph(\hat{F},T_k(F))\le \rho/4.
$$
Thus, every point of $\hat F$ is within $\rho/2$ distance from some
point of $T_k(F)$ and in particular
\begin{equation}\label{fattk}
  \hat F\subseteq \bigcup_{\eta\in T_k(F)} B_{\rho/2}(\eta).
\end{equation}

Given $x\in \hat F$ choose $y\in T_k(F)$ such that $\|x-y\|<\rho/2$.
Then $$B_r(x)\subseteq B_{r+\rho/2}(y)\subseteq B_{2r}(y).$$ Let
$\{\,B_{\rho/2}(y_i): i=1,\ldots,N\,\}$ be a cover of $T_k(F)\cap
B_{2r}(y)$ with
\[
N \leq K_s \bigg( \frac{2r}{\rho/2} \bigg)^s.
\]
It follows that
$$
\hat F\cap B_r(x) \ \subseteq \bigcup_{\eta\in T_k(F)\cap B_{2r}(y)}
B_{\rho/2}(\eta) \ \subseteq  \ \bigcup_{i=1}^N \bigcup_{\eta\in
  B_{\rho/2}(y_i)} B_{\rho/2}(\eta) \ = \  \bigcup_{i=1}^N B_{\rho}(y_i).
$$
Therefore
$$
\cover_{\hat F\cap B_r(x)}(\rho)\le N\le K_s 4^s (r/\rho)^s$$
for all $x\in \hat F$ and $0<\rho<r<1/2$ which is sufficient to prove $\dima \hat F \le s$.  It
follows that $\dima \hat F \le \dima F$.
\end{proof}

\subsection{Proof of Theorem~\ref{thmnotWSP}}

Similarities $S_i$ on the real line with ratio $c_i>0$ come in two
types: with reflection and without.  Namely,
$$
S_i(x)=-c_i x + b_i \words{and} S_i(x)=c_ix + b_i.
$$
In the first case the derivative $S_i'=-c_i<0$.  In order to treat
iterated function systems that contain both types of similarities we
first prove the following result.

\begin{lemma}\label{noreflect}
  Suppose that there exist $\alpha_k,\beta_k\in\I^*$ such that
$$\|S_{\alpha_k}^{-1}\circ S_{\beta_k}-I\|_\infty\to 0
\words{as} k\to\infty.$$ Then there exist $\alpha'_k,\beta'_k\in\I^*$
such that
$$\|S_{\alpha'_k}^{-1}\circ S_{\beta'_k}-I\|_\infty\to 0
\words{as} k\to\infty$$ and $S_{\alpha'_k}'=c_{\alpha'_k}>0$ for all
$k$.
\end{lemma}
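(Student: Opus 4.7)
The plan is to show that whenever $S_{\alpha_k}$ happens to be orientation-reversing we can prepend a single reflecting generator of the IFS to both words, thereby making the first composition orientation-preserving without disturbing the crucial product $S_{\alpha_k}^{-1}\circ S_{\beta_k}$.

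First I would observe that since $\|S_{\alpha_k}^{-1}\circ S_{\beta_k}-I\|_\infty\to 0$ and the identity has derivative $+1$, for all sufficiently large $k$ the map $S_{\alpha_k}^{-1}\circ S_{\beta_k}$ must itself have positive derivative. By the chain rule the sign of this derivative equals the product of the signs of $S'_{\alpha_k}$ and $S'_{\beta_k}$, so after discarding finitely many initial indices I may assume that $S_{\alpha_k}$ and $S_{\beta_k}$ share a common orientation for every $k$.

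Now I split into two cases. If $S'_{\alpha_k}>0$ then I simply set $\alpha'_k=\alpha_k$ and $\beta'_k=\beta_k$ and there is nothing to check. If instead $S'_{\alpha_k}<0$ then the very existence of an orientation-reversing composition $S_{\alpha_k}$ forces at least one generator, say $S_{i^*}$ with $i^*\in\I$, to be a reflection, because any composition of orientation-preserving similarities is orientation-preserving. In this case I would take $\alpha'_k=(i^*,\alpha_k)$ and $\beta'_k=(i^*,\beta_k)$, i.e., prepend $i^*$ to both words. Prepending the same reflection flips both orientations, so $S'_{\alpha'_k}=c_{i^*}c_{\alpha_k}=c_{\alpha'_k}>0$ as required, and the product telescopes cleanly,
\[
S_{\alpha'_k}^{-1}\circ S_{\beta'_k}
= S_{\alpha_k}^{-1}\circ S_{i^*}^{-1}\circ S_{i^*}\circ S_{\beta_k}
= S_{\alpha_k}^{-1}\circ S_{\beta_k},
\]
so the hypothesis $\|S_{\alpha_k}^{-1}\circ S_{\beta_k}-I\|_\infty\to 0$ is inherited verbatim by the new pair.

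I do not anticipate any serious obstacle; the only point worth pausing on is the availability of a reflecting generator whenever we need one, and this follows immediately from the observation that a composition of orientation-preserving similarities is orientation-preserving. In the degenerate case where no generator in the IFS is a reflection, every $S_{\alpha_k}$ is automatically orientation-preserving and the first branch of the construction covers every $k$, so the conclusion is then trivially true.
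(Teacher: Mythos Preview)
Your proof is correct and in fact a little slicker than the paper's. The paper first passes to a subsequence on which all $S_{\alpha_k}$ share the same orientation; in the reflecting case it then \emph{appends} a reflecting generator $S_1$ to both words, so that $S_{\alpha'_k}=S_{\alpha_k}\circ S_1$ and $S_{\beta'_k}=S_{\beta_k}\circ S_1$. This yields
\[
S_{\alpha'_k}^{-1}\circ S_{\beta'_k}
= S_1^{-1}\circ\bigl(S_{\alpha_k}^{-1}\circ S_{\beta_k}\bigr)\circ S_1,
\]
a conjugate rather than the original map, and the paper must then carry out the extra estimate $\|S_1^{-1}\circ\Psi\circ S_1-I\|_\infty\le c_1^{-1}\|\Psi-I\|_\infty$ (using that $S_1$ maps $[0,1]$ into itself) to conclude. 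By \emph{prepending} the reflecting generator instead, you arrange for the inserted maps to sit adjacent in the middle and cancel exactly, so the product is literally unchanged and no estimate is required; you also avoid passing to a subsequence by treating each $k$ on its own. Your preliminary observation that the signs of $S'_{\alpha_k}$ and $S'_{\beta_k}$ eventually agree is correct but plays no role in your construction and could be omitted.
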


\begin{proof}
  If there are an infinite number of $k$ such that $S_{\alpha_k}'>0$
  then let $\alpha'_k$ be this subsequence and we are done.
  Otherwise, there must be an infinite number of $k$ such that
  $S_{\alpha_k}'<0$.  By taking a subsequence we may assume, in fact,
  that $S_{\alpha_k}'<0$ for all $k$.

  Now, since $S_{\alpha_k}$ contains a reflection for every $k$, there
  must be at least one similarity in the iterated function system that
  contains a reflection.  Without loss of generality, assume that
  $S_1'=-c_1<0$.  Define $\alpha'_k$ and $\beta'_k$ so that
$$
S_{\alpha'_k}^{-1}= S_1^{-1}\circ S_{\alpha_k}^{-1} \words{and}
S_{\beta'_k}= S_{\beta_k}\circ S_1.
$$
It follows that $S_{\alpha'_k}'>0$ for all $k$ and furthermore that
\begin{plain}$$\eqalign{ |(S_{\alpha'_k}^{-1}\circ S_{\beta'_k}-I)(x)|
    &= |S_1^{-1}\big( S_{\alpha_k}^{-1}\circ S_{\beta_k}\circ
    S_1(x)\big)-S_1^{-1}\big(S_1(x)\big)|\cr &=
    c_1^{-1}|S_{\alpha_k}^{-1}\circ S_{\beta_k}\circ S_1(x)-S_1(x)|\cr
    &= c_1^{-1}|(S_{\alpha_k}^{-1}\circ S_{\beta_k} -I )(S_1(x))|.
  }$$\end{plain}%
Since $F\subseteq [0,1]$ it follows that $S_1(x)\in [0,1]$ for $x\in
[0,1]$.  Therefore
$$
\|S_{\alpha'_k}^{-1}\circ S_{\beta'_k}-I\|_{\infty} \le c_1^{-1}
\|S_{\alpha_k}^{-1}\circ S_{\beta_k}-I\|_{\infty} \to 0,
$$
which finishes the proof of the lemma.
\end{proof}

We are now ready to prove Theorem \ref{thmnotWSP}.

\begin{proof}[Proof of Theorem \ref{thmnotWSP}]
  Since $F$ is non-trivial there must be two distinct points $a,b\in
  F$ and similarities $S_1$ and $S_2$ such that
  \begin{equation}\label{star}
    S_1(a)=a \words{and} S_2(b)=b.
  \end{equation}
  Choose $\rho>0$ small enough that $B_{2\rho}(a)\cap
  B_{2\rho}(b)=\emptyset$.

  Since the Weak Separation Property does not hold, $I\in \overline {{\cal E}\setminus\{ I\}}$ and we can choose
  $\alpha_k,\beta_k\in \I^*$ such that
$$0<\|S_{\alpha_k}^{-1}\circ S_{\beta_k}-I\|_\infty\to 0
\words{as} k\to\infty.$$ By Lemma \ref{noreflect} we may assume that
$$S_{\alpha_k}'=c_{\alpha_k}>0\words{for all} k.$$
Define $\varphi_k=S_{\alpha_k}^{-1}\circ S_{\beta_k}-I$.
Note that
$\varphi_k\ne 0$ for all $k$.  Therefore either $\varphi_k$ is a
similarity or a non-zero constant function.
Since similarities are injective, images of the disjoint
$2\rho$ balls $B_{2\rho}(a)$ and $B_{2\rho}(b)$ are disjoint.  In
particular, the origin can lie in at most one of these images.
On the other-hand, if $\varphi_k$ is a non-zero constant function,
then the image of both of these balls consists of the same non-zero
point and in particular does not contain the origin.
It
follows that each $\varphi_k\colon\R\to\R$ must satisfy at least one
of the following four conditions:
$$	\varphi_k(B_{2\rho}(a))\subseteq (0,\infty),\qquad
\varphi_k(B_{2\rho}(b))\subseteq (0,\infty),
$$
$$	\varphi_k(B_{2\rho}(a))\subseteq (-\infty,0)\words{or}
\varphi_k(B_{2\rho}(b))\subseteq (-\infty,0).
$$
Moreover, by taking a subsequence we may assume that all $\varphi_k$
satisfy the same inclusion.  For definiteness assume
$\varphi_k(B_{2\rho}(a))\subseteq (0,\infty)$ for all $k$.  The other
cases may be obtained from the first by relabeling $a$ and $b$ and
making a change of coordinates to reverse the direction of the real
line if necessary.

Let $f=S_1 \circ S_1$.  Then $f'=c=c_1^2>0$.  This avoids the changes
in sign that might occur when $S_1$ contains a reflection.
Fix $M$ large enough that $f^M(F)\subseteq B_\rho(a)$
(see (\ref{star})).  Define
\begin{plain}$$\eqalign{ \zeta_k&=\inf\big\{\, \varphi_k(x) : x\in
    B_{2\rho}(a)\,\big\},\cr
	\delta_k&=\inf\big\{\, \varphi_k(x) :
    x\in B_\rho(a)\,\big\}, \cr \text{and} \qquad
    \Delta_k&=\sup\big\{\, \varphi_k(x) : x\in B_\rho(a)\,\big\}.\cr
  }$$\end{plain}%
Then
$$
\delta_k-\zeta_k=\rho|\varphi_k'| \words{and}
\Delta_k-\delta_k=2\rho|\varphi_k'|.
$$
Therefore (since $\zeta_k\ge0$)
$$
\rho|\varphi_k'|\le\delta_k \words{and} \Delta_k\le 3\delta_k.
$$
While the inequality $\Delta_k\le 3\delta_k$ holds whether
$\varphi_k$ is a similarity
or a non-zero constant function,
note that in the latter case
$\varphi_k'=0$ and then $\Delta_k=\delta_k$.
In either case,
since we know $\|\varphi_k\|_\infty\to 0$ then $\delta_k\to 0$ as
$k\to\infty$.

We now derive an estimate that will be used as the basis for our
inductive construction in the next paragraph.
First note that
\begin{plain}$$\eqalign{ (f^{-m}\circ S_{\alpha_k}^{-1}&\circ
    S_{\beta_k}\circ f^{m}-I) (x)\cr &=
    f^{-m}\big(S_{\alpha_k}^{-1}\circ S_{\beta_k}\circ f^{m}(x)\big)
    -f^{-m}\big(f^m(x)\big)\cr &= c^{-m}\big(S_{\alpha_k}^{-1}\circ
    S_{\beta_k}\circ f^{m}(x) -f^m(x)\big)\cr &= c^{-m} \varphi_k
    \big(f^{m}(x)\big).  }$$\end{plain} Since $f^{m}(x)\in B_\rho(a)$
for every $x\in F$ and $m\ge M$ we obtain
\begin{equation}\label{gammaest}
  c^{-m}\delta_k\le
  (f^{-m}\circ S_{\alpha_k}^{-1}\circ S_{\beta_k}\circ f^{m}-I) (x)
  \le 3c^{-m}\delta_k
\end{equation}
for every $x\in F$ and $m\ge M$.

Given $\epsilon>0$
we now define natural numbers $k_j$, $m_j$, and maps $g_j$ and $h_j$
by induction on $j$.
First, choose $k_1$ and $m_1\ge M$
so large that
$$
c^{-m_1}\delta_{k_1}<\epsilon\le c^{-m_1-1}\delta_{k_1}$$
(this can be done by first choosing $k_1$ large enough that
$\delta_{k_1}<\epsilon c^{M}$ and then choosing $m_1$
appropriately) and define
$$g_1=f^{-m_1}\circ S_{\alpha_{k_1}}^{-1}
\words{and} h_1=S_{\beta_{k_1}}\circ f^{m_1}.
$$
From the estimate \eqref{gammaest} and our choice of $k_1$ and $m_1$
it follows that
$$
c \epsilon\le (g_1\circ h_1-I)(x)\le 3\epsilon
$$
holds for every $x\in F$.  Let $d_j=g_j'$ be the derivative of $g_j$.
Since both $f$ and $S_{\alpha_{k_1}}$ have positive derivatives, we
know that $d_1>0$.  Thus $d_1$ is the ratio corresponding
to the similarity $g_1$ such that $|g_1(x)-g_1(y)|=d_1|x-y|$
for every $x,y\in\R$.
By the construction below, it will also follow
that $d_j>0$ for every $j\in\N$.

For $j\ge 2$ choose $k_j$ and $m_j\ge M$ so large that
$$
d_{j-1} c^{-m_j}\delta_{k_j}<\epsilon\le d_{j-1}
c^{-m_j-1}\delta_{k_j}
$$
(this can be done in a similar way to the choice of $k_1$ and $m_1$,
above) and define
$$
g_j=g_{j-1}\circ f^{-m_j}\circ S_{\alpha_{k_j}}^{-1} \words{and}
h_j=S_{\beta_{k_j}}\circ f^{m_j}\circ h_{j-1}.
$$
Since $d_j=d_{j-1} c^{-m_j} c_{\alpha_{k_j}}^{-1}$ and $d_{1}>0$,
it follows by induction that $d_j>0$.  Since $h_{j-1}(x) \in F$ we
have $f^{m_j} (h_{j-1}(x)) \in B_\rho(a)$ and so (recalling that
$\varphi_k=S_{\alpha_k}^{-1}\circ S_{\beta_k}-I$)
\begin{plain}$$\eqalign{ (g_j\circ h_j- g_{j-1}\circ h_{j-1})(x)
    &=g_{j-1}\circ f^{-m_j}(S_{\alpha_{k_j}}^{-1} \circ
    S_{\beta_{k_j}}\circ f^{m_j}\circ h_{j-1}(x))\cr &\qquad-
    g_{j-1}\circ f^{-m_j}(f^{m_j}\circ h_{j-1}(x))\cr &=d_{j-1}
    c^{-m_j} \varphi_{k_j} (f^{m_j}\circ h_{j-1}(x))
  }$$\end{plain}%
implies for every $x\in F$ that
$$
c\epsilon\le (g_j\circ h_j- g_{j-1}\circ h_{j-1})(x)\le 3\epsilon.
$$

We next claim for all $n\in\N$ that
\begin{equation}\label{tanpts}
\{ a\}\cup \{\, g_j\circ h_j(a) : j=1,\ldots,n\,\}
\subseteq \{\, g_n\circ S_\beta(a) :\beta\in\I^*\,\}.
\end{equation}
This follows
from induction on $n$.  For $n=1$ we readily see this is true by
taking $\beta\in \I^*$ so that $S_\beta=h_1$ and then taking $\beta$
so that $S_\beta=g_1^{-1}$.  Suppose the claim holds true for $n-1$,
then
$$\{a\}\cup\{\, g_j\circ h_j(a) : j=1,\ldots,n-1\,\}
\subseteq \{\, g_{n-1}\circ S_\beta(a) :\beta\in\I^*\,\}.$$ Choose
$\gamma\in\I^*$ so that $S_\gamma=S_{\alpha_{k_n}}\circ f^{m_n}$.
Since $g_n=g_{n-1}\circ f^{-m_{n}}\circ S_{\alpha_{k_n}}^{-1}$, it
follows that
\begin{plain}$$\eqalign{ \{\, g_n\circ S_\beta(a) :\beta\in\I^*\,\}
    &\supseteq \{\, g_n\circ S_\gamma\circ S_\beta(a)
    :\beta\in\I^*\,\}\cr &= \{\, g_{n-1}\circ S_\beta(a)
    :\beta\in\I^*\,\}\cr &\supseteq \{a\}\cup\{\, g_j\circ h_j(a) :
    j=1,\ldots,n-1\,\}.  }$$\end{plain}%
Additionally taking $\beta\in\I^*$ so that $S_\beta=h_n$ completes the
induction.

We finish the proof by constructing a weak pseudo-tangent to $F$ that
has Assouad dimension $1$.  Let $g_{j,n}$ be the functions $g_j$
defined above when $\epsilon=(c n)^{-1}$ and define $T_n=g_{n,n}$.
It follows from \eqref{tanpts} that
\begin{plain}$$\eqalign{ T_nF&\supseteq \{\, g_{n,n}\circ S_\beta (a)
    : \beta\in \I^*\,\}\cr &\supseteq \{a\}\cup \{\, g_{j,n}\circ
    h_{j,n}(a) : j=1,\ldots,n\,\}.  }$$\end{plain}%
Since $ n^{-1}\le g_{j,n}\circ h_{j,n}(a)- g_{j-1,n}\circ h_{j-1,n}(a)
\le 3(c n)^{-1} $ we obtain
$$ \ph([a,a+1],T_n(F))\le 3 (c n)^{-1}\to 0.$$
Therefore $[a,a+1]$ is a weak pseudo-tangent of $F$ and
$$
\dima F\ge \dima [a,a+1] = 1.
$$
This finishes the proof of our lower bound for self-similar sets on
the real line that do not satisfy the weak separation condition.
\end{proof}

\subsection{Proof of Theorem~\ref{thmnotWSPRd}}

In this section we prove the more complicated case
concerning self-similar sets $F$ in $\R^d$.  Thus, $F$ is the attractor
of an iterated function system $\{S_i\}_{i\in\I}$ given by
$$
	S_i(x)=c_i O_i x + b_i
$$
where $O_i\in O(d)$ is a $d\times d$ orthogonal matrix, $c_i\in(0,1)$ and
$b_i\in\R^d$ for each $i\in\I$.
In the case of the real line
$O_i=\pm1$, and so $O_i^2$ always yielded the identity.
However, in higher dimensions there are orthogonal matrices such that
$O^m\neq I$ for all $m\ge0$.  To deal with this
we make use of Lemma \ref{lemG} below.
Given any $\gamma\in\I^*$, define $O_\gamma\in
O(d)$ and $b_\gamma\in\R^d$ such that
$$
S_\gamma(x)=c_\gamma O_\gamma x+b_\gamma.
$$
Note that $S_\gamma$ is a similarity with
ratio $c_\gamma$ such that
$$
	\|S_\gamma(x)-S_\gamma(y)\|=c_\gamma \|x-y\|
	\words{for all} x,y\in\R^d.$$
We now define
$$
	{\cal G}=\{\,O_\alpha : \alpha\in\I^*\,\}
\words{and}
	G=\overline {\cal G}.
$$
and show that for any $\epsilon>0$ there exists a finite collection of elements
of the form $O_\alpha$ with $\alpha\in I^*$ which can be used to approximate every element of $G$ to within $\epsilon$.

\begin{lemma}\label{lemG}
The set $G$ is a compact group.  Consequently,
given $\epsilon>0$ there exists a finite set $\J\subseteq\I^*$
such that for every $U\in G$ there is $\alpha\in\J$
such that $\|U-O_\alpha\|<\epsilon$.
\end{lemma}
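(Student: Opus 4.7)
The plan is to establish the two assertions in sequence: first that $G$ is a compact subgroup of $O(d)$, and then deduce the finite approximation property from compactness together with the definition of $G$ as a closure. The compactness half of the first assertion is immediate, since $\mathcal{G}\subseteq O(d)$ and $O(d)$ is compact in the operator norm, so $G=\overline{\mathcal{G}}$ is a closed subset of a compact set.

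For the group structure, I would start by observing that $\mathcal{G}$ is already a subsemigroup: if $\alpha,\beta\in\I^*$ and $\alpha\beta$ denotes the concatenated word, then $S_{\alpha\beta}=S_\alpha\circ S_\beta$ forces $O_{\alpha\beta}=O_\alpha O_\beta$. By continuity of matrix multiplication this property is inherited by the closure, so $G$ is a closed subsemigroup of $O(d)$. The main step, and the only place where a genuine argument is needed, is the classical observation that a closed subsemigroup of a compact group is automatically a subgroup. Given $U\in G$, the sequence $U,U^2,U^3,\dots$ lies in $G$ and has a convergent subsequence by compactness, so for any $\delta>0$ I can find $m<n$ with $\|U^n-U^m\|<\delta$. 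Because elements of $O(d)$ act as isometries on the space of matrices equipped with the operator norm, left multiplication by $U^{-m}$ preserves the norm, giving $\|U^{n-m}-I\|<\delta$. Letting $\delta\to 0$ and using that $G$ is closed yields $I\in G$; the same inequality shows that $U^{n-m-1}\in G$ approximates $U^{-1}$ to within $\delta$, so $U^{-1}\in G$ as well. This is the only mildly delicate point of the lemma.

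For the finite approximation, I would invoke compactness of $G$ to produce a finite $\epsilon/2$-net $U_1,\dots,U_N\in G$. Since every $U_i$ lies in $\overline{\mathcal{G}}$, I can choose $\alpha_i\in\I^*$ with $\|U_i-O_{\alpha_i}\|<\epsilon/2$, and set $\J=\{\alpha_1,\dots,\alpha_N\}$. For an arbitrary $U\in G$, picking $U_i$ within $\epsilon/2$ of $U$ and applying the triangle inequality gives $\|U-O_{\alpha_i}\|<\epsilon$, as required. No part of this second step requires anything beyond the compactness already established, so the whole proof is essentially a packaging of the semigroup-to-group argument together with a standard net construction.
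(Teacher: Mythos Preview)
Your proof is correct and follows essentially the same approach as the paper: both arguments establish that a closed subsemigroup of the compact group $O(d)$ is a subgroup by examining accumulation points of the sequence of powers $U^n$, and both deduce the finite approximation from compactness. The only cosmetic difference is that the paper extracts a finite subcover of $G$ directly from the open cover $\{B_\epsilon(O_\alpha):\alpha\in\I^*\}$, whereas you pass through an $\epsilon/2$-net and the triangle inequality; the content is the same.
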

\begin{proof}
Let $O(d)$ be the group of all $d\times d$ orthogonal matrices.
We claim that $G$ is a subgroup of $O(d)$.  Indeed, since $G\subseteq O(d)$
and ${\cal G}$ is a semigroup, then $G$ is at least a semigroup.
Let $U\in G$ and consider the sequence of iterates $U^n$ where
$n\in\N$.  Since $O(d)$ is compact and $G$ is closed, there exists
a subsequence $U^{n_k}\to V$ where $V\in G$.  It follows that
$U^{n_{k+1}-n_k}\to I$ and consequently $I\in G$.
We may further assume that $n_{k+1}-n_k\ge 2$ for every $k$, from which it
follows that $U^{n_{k+1}-n_k-1}\to U^{-1}$
and consequently $U^{-1}\in G$.  This proves the claim.

Now, since $G$ is compact,
$$
	G\subseteq \bigcup_{\alpha\in\I^*} B_\epsilon(O_\alpha)
\words{implies that}
	G\subseteq \bigcup_{\alpha\in\J} B_\epsilon(O_\alpha)
$$
for some finite subset $\J\subseteq\I^*$.
Let $U\in G$.  Then $U\in B_{\epsilon}(O_\alpha)$
for some $\alpha\in\J$ and consequently $\|U-O_\alpha\|<\epsilon$.
\end{proof}

For self-similar sets in $\R^d$ we insist that $F$ not lie in any
$(d-1)$-dimensional hyperplane of $\R^d$.
To make use of this hypothesis we first show that we
can find a collection of fixed points of the similarities which span $\R^d$, after a suitable translation.

\begin{lemma}\label{fixpoints}
Suppose that $F$ is a self-similar set in $\R^d$
that is not contained in any hyperplane.
Then there exist $\gamma_n\in\I^*$, where $n=1,\ldots, d+1$,
such that the fixed points $a_n$ of the maps $S_{\gamma_n}$
satisfy $\span\{\,a_n-a_{d+1} : n=1,\ldots, d\,\}=\R^d$.
\end{lemma}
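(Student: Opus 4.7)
The plan is to exploit two basic facts: $(i)$ every composition $S_\gamma$ with $\gamma \in \I^*$ is a contracting similarity whose unique fixed point lies in $F$, and $(ii)$ the set of all such fixed points is dense in $F$. Granted these, the hypothesis that $F$ is not contained in any hyperplane will allow me to pick $d+1$ fixed points in affinely general position, and the conclusion about the span of differences follows immediately from affine independence.

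For $(i)$, each $S_\gamma$ has ratio $c_\gamma \in (0,1)$ and so by the Banach fixed point theorem has a unique fixed point $a_\gamma \in \R^d$; iterating $S_\gamma$ on any starting point of $F$ produces a sequence in $F$ (since $S_\gamma(F) \subseteq F$) that converges to $a_\gamma$, so $a_\gamma \in F$. For $(ii)$, I would pick $x \in F$ and choose an address $(i_1,i_2,\ldots) \in \I^\N$ with $x \in S_{i_1}\circ\cdots\circ S_{i_n}(F)$ for every $n$; such an address exists because $F = \bigcup_{i\in\I} S_i(F)$ and one may iterate. Setting $\gamma^{(n)} = (i_1,\ldots,i_n)$ and letting $a^{(n)}$ be the fixed point of $S_{\gamma^{(n)}}$, both $x$ and $a^{(n)}$ lie in the set $S_{\gamma^{(n)}}(F)$, whose diameter is $c_{\gamma^{(n)}}\lvert F\rvert \to 0$; hence $a^{(n)}\to x$ and density is established.

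To finish, I would select the $\gamma_n$ inductively. Suppose fixed points $a_{\gamma_1},\ldots,a_{\gamma_k}$ have been chosen affinely independently with $k \le d$. Their affine hull is at most $(k-1)$-dimensional and so lies inside some hyperplane $H$. By hypothesis $F \not\subseteq H$, so there is a point $y \in F \setminus H$; since $H$ is closed, density provides $\gamma_{k+1} \in \I^*$ with $a_{\gamma_{k+1}}$ close enough to $y$ to lie outside $H$. Iterating produces $a_{\gamma_1},\ldots,a_{\gamma_{d+1}}$ that are affinely independent, which is equivalent to the vectors $a_{\gamma_n} - a_{\gamma_{d+1}}$ for $n=1,\ldots,d$ being linearly independent and thus spanning $\R^d$. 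The only substantive step is the density claim in $(ii)$; the inductive extraction of points in general position and the translation to a spanning set are routine. I do not anticipate any real technical obstacle beyond setting up the address argument carefully.
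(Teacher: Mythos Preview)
Your proof is correct. Both your argument and the paper's rest on the same underlying observation---that fixed points of the maps $S_\gamma$ lie in $F$ and approximate every point of $F$ arbitrarily well---but the two proofs organise the selection differently. The paper first chooses $d+1$ points $b_1,\ldots,b_{d+1}\in F$ in affinely general position (possible since $F$ lies in no hyperplane), observes that the condition $\det[b_1-b_{d+1}\mid\cdots\mid b_d-b_{d+1}]\neq 0$ is open in $(b_1,\ldots,b_{d+1})$, and then approximates all the $b_n$ \emph{simultaneously} by fixed points $a_n$ using the $\I_r$ machinery already in place; openness guarantees the perturbed tuple remains in general position. You instead prove density of fixed points explicitly and then build the tuple \emph{inductively}, at each stage choosing a new fixed point outside the affine hull of the previous ones. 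Your route is slightly more self-contained and geometric; the paper's is a touch quicker once the $\I_r$ notation is available and avoids the inductive bookkeeping. Neither has any real advantage over the other here.
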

\begin{proof}
Given $a_n\in\R^d$ let $A$ be the $d\times d$ matrix
given by the vectors $a_n-a_{d+1}$ as
$$
	A=\Big[a_1-a_{d+1}\Big| a_2-a_{d+1} \Big|\cdots \Big|a_d-a_{d+1}\Big].
$$
We are looking for $a_n$ such that $\rank A = d$ or
equivalently such that $\det A\ne 0$.
Since $\det A$ depends continuously on the $a_n$ then
$$
	{\cal R}=\{\, (a_1,\ldots,a_{d+1}) : \det A\ne 0\,\}
		\subseteq [\R^d]^{d+1}
$$
is open.  Since $F$ is not contained in any hyperplane,
there exist $b_n\in F$, $n=1,\ldots,d+1$, such that $(b_1,\ldots,b_{d+1})\in {\cal R}$.
Moreover, since ${\cal R}$ is open, there is $\epsilon>0$ such
that $|a_n-b_n|<\epsilon$ for $n=1,\ldots, d+1$ implies that
$(a_1,\ldots,a_{d+1})\in{\cal R}$.

Choose $r<c_{\rm min}$ such that $r \diam F< \epsilon$.  Then
$$b_n\in F=\bigcup_{\alpha\in \I_r} S_\alpha(F)
\words{implies that}
b_n\in S_{\gamma_n}(F)$$
for some $\gamma_n\in I_r$ (see (\ref{Iris}) for the definition of $I_r$).  Let $a_n$ be the fixed point
of $S_{\gamma_n}$.  Then
$$a_n\in F
\words{implies that}
a_n=S_{\gamma_n}(a_n)\in S_{\gamma_n}(F).$$
Therefore
$$|a_n-b_n|\le \diam S_{\gamma_n} F=c_{\gamma_n}\diam F
	\le r \diam F <\epsilon$$
and consequently  $(a_1,\ldots,a_{d+1})\in{\cal R}$.
\end{proof}

As the geometry in $\R^d$ is more complicated than on the real line
we will need to prove one more lemma before commencing with the proof
of Theorem \ref{thmnotWSPRd} itself.

\begin{lemma}\label{findab}
Let $F$ be a self-similar set in $\R^d$ not contained in
any $(d-1)$-dimensional hyperplane.
Let $\Phi_k$ be a sequence of affine linear
maps on $\R^d$ such that $0<\|\Phi_k\|_\infty\to0$
as $k\to\infty$.
Then there exist $\rho>0$, $\gamma\in\I^*$ and $a\in\R^d$
such that $S_\gamma(a)=a$ and a subsequence $k_j$ such that
for every $j$ we have
$$
\sup\{\, \|\Phi_{k_j}(x)\| : x\in B_\rho(a)\,\}
\le 3\inf\{\, \|\Phi_{k_j}(x)\| : x\in B_\rho(a)\,\}
$$
and
$\rho\|D\Phi_{k_j}\|\le \|\Phi_{k_j}(a)\|$.
\end{lemma}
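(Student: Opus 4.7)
The plan is to locate a point $a$ among a small, fixed collection of fixed points of iterated similarities at which $\|\Phi_k(a)\|$ is (up to a constant factor) as large as $\|\Phi_k\|$ can be on that collection; the spanning property from Lemma \ref{fixpoints} will then force the linear part $D\Phi_k$ to be small compared to $\|\Phi_k(a)\|$, so that on a sufficiently small ball about $a$ the affine map $\Phi_k$ cannot oscillate by more than a factor of three.

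First I would invoke Lemma \ref{fixpoints} to extract $\gamma_1,\ldots,\gamma_{d+1}\in\I^*$ whose fixed points $a_1,\ldots,a_{d+1}\in F$ satisfy $\span\{a_n-a_{d+1}:1\le n\le d\}=\R^d$. Because $\{a_n-a_{d+1}\}_{n=1}^d$ is then a basis of $\R^d$, there is a constant $C_1>0$, depending only on this (fixed) basis, such that every $v\in\R^d$ admits an expansion $v=\sum_{n=1}^d\lambda_n(a_n-a_{d+1})$ with $\max_n|\lambda_n|\le C_1\|v\|$.

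Next, put $M_k=\max_{1\le n\le d+1}\|\Phi_k(a_n)\|$. One sees $M_k>0$, since otherwise $\Phi_k(a_{d+1})=0$ and $D\Phi_k(a_n-a_{d+1})=\Phi_k(a_n)-\Phi_k(a_{d+1})=0$ for $n=1,\ldots,d$, forcing $\Phi_k\equiv 0$ by the spanning property and contradicting $\|\Phi_k\|_\infty>0$. By pigeonhole, some index $n^\ast\in\{1,\ldots,d+1\}$ realises the maximum for infinitely many $k$; passing to this subsequence $k_j$ and setting $a=a_{n^\ast}$, $\gamma=\gamma_{n^\ast}$ gives $S_\gamma(a)=a$ together with $\|\Phi_{k_j}(a)\|=M_{k_j}$ for every $j$.

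The crux of the argument is then the bound $\|D\Phi_{k_j}\|\le CM_{k_j}$ with $C=2dC_1$: for any unit $v\in\R^d$, expanding in the basis gives
$$
\|D\Phi_{k_j}(v)\|\le \sum_{n=1}^d|\lambda_n|\,\|\Phi_{k_j}(a_n)-\Phi_{k_j}(a_{d+1})\|\le 2dC_1 M_{k_j}.
$$
Choosing $\rho=1/(2C)$ now yields $\rho\|D\Phi_{k_j}\|\le M_{k_j}/2\le \|\Phi_{k_j}(a)\|$, which is the second required inequality; and for $x\in B_\rho(a)$ we have $\|\Phi_{k_j}(x)-\Phi_{k_j}(a)\|\le \rho\|D\Phi_{k_j}\|\le M_{k_j}/2$, whence $M_{k_j}/2\le \|\Phi_{k_j}(x)\|\le 3M_{k_j}/2$, so $\sup\le 3\inf$ on $B_\rho(a)$. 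The only nontrivial step is the comparison of $\|D\Phi_{k_j}\|$ with $M_{k_j}$; this is exactly where the hypothesis that $F$ does not lie in a hyperplane is consumed, through Lemma \ref{fixpoints}. Note that the hypothesis $\|\Phi_k\|_\infty\to 0$ is not needed for this lemma in isolation, only $\|\Phi_k\|_\infty>0$.
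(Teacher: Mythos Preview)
Your argument is correct, and it is genuinely different from the paper's proof. The paper proceeds via the singular value decomposition of $D\Phi_{k_j}$: it identifies the top singular vectors $u_j,v_j$, passes to a subsequence so that $v_j\to v_*$, and then uses the spanning property of the fixed points from Lemma~\ref{fixpoints} to find an index $n$ with $(a_n-a_{d+1})\cdot v_*\neq 0$. Projecting onto $u_j$ separates $\varphi_j(B_{2\rho}(a_n))$ from $\varphi_j(B_{2\rho}(a_{d+1}))$, so one of these balls avoids zero; a further pigeonhole selects $a\in\{a_n,a_{d+1}\}$, and then the estimate $\delta_k\ge s_k\rho$ follows from the projection bound. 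Your route is more direct: you simply pick the fixed point at which $\|\Phi_k\|$ is maximal among the $d+1$ candidates, and then the basis expansion of an arbitrary unit vector in terms of $\{a_n-a_{d+1}\}$ immediately controls $\|D\Phi_{k_j}\|$ by $M_{k_j}$. This avoids the SVD, the convergence $V_j\to V$, and the second pigeonhole step entirely, at the cost of a slightly worse implicit constant (your $\rho=1/(4dC_1)$ versus the paper's $\rho$ coming from the geometry of the two balls). Both approaches consume the non-hyperplane hypothesis through Lemma~\ref{fixpoints}, but yours uses it once to bound the linear part globally, whereas the paper uses it to locate a specific direction of non-degeneracy. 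Your closing remark that only $\|\Phi_k\|_\infty>0$ is needed is also accurate; the paper's proof records $s_j\to 0$ but never uses it inside the lemma.
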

\begin{proof}
Note that the derivative $D \Phi_k$ is a
$d\times d$ matrix.
We separate our proof into two cases:
the first case is when there are an infinite
number of $k$ such that the derivative $D\Phi_k=0$
and we may assume by taking subsequences that
$D\Phi_{k_j}=0$ for all $k_j$; the second case is when there are
only a finite number of $k$ such that $D\Phi_k=0$ and we may
assume $D\Phi_{k_j}\ne 0$ for all $j$.

If $D\Phi_{k_j}=0$ for all $j$, then $\Phi_{k_j}=B_j$ where $B_j\in\R^d$
and $B_j\ne 0$.  In this case any $\rho>0$ and $\gamma\in\I^*$ yields
a similarity $S_\gamma$ with fixed point $a$
such that $0\not\in\Phi_{k_j}(B_{2\rho}(a))$.

If $D\Phi_{k_j}\ne 0$ for all $j$, write
the singular value decomposition
of $D\Phi_{k_j}$ as
$$
	D\Phi_{k_j} = U_j \Sigma_j V_j^{-1}
$$
where $U_j,V_j\in O(d)$ are $d\times d$ orthogonal matrices
and $\Sigma_j$ is a diagonal matrix with singular
values ordered such that $\sigma_{j,1}\ge\cdots\ge \sigma_{j,d}\ge 0$.
Since $O(d)$ is compact, we may assume by taking further
subsequences, that $V_j\to V$ for some $V\in O(d)$.
Define
$$s_j=\sigma_{j,1}=\|D\Phi_{k_j}\|,
\quad u_j=U_j e_1,\quad
v_j=V_j e_1
\words{and}
v_*=V e_1,
$$
where $e_1 = (1,0, \dots, 0)$.  Then $0<\|D\Phi_{k_j}\|\to 0$ implies that
$0<s_j\to 0$ as $j\to\infty$.

Since $F$ is not contained in any hyperplane,
Lemma~\ref{fixpoints} implies that there exist $\gamma_n \in \I^*$
and $a_n\in F$ such that
\begin{equation}\label{nohyper}
	S_{\gamma_n}(a_n)=a_n
\words{and}
\span\{\, a_n-a_{d+1} : n=1,\ldots d\,\}=\R^d.
\end{equation}
By \eqref{nohyper} there exists $n$ such that
$$
(a_n-a_{d+1})\cdot v_*\ne 0.
$$
Define
	$\varphi_j(x)=u_j\cdot \Phi_{k_j}(x)$.
Note that
$$
	\varphi_j(x)=
	u_j\cdot (D\Phi_{k_j})x+z_j
    =U_j e_1\cdot U_j\Sigma_j V_j^{-1}x+z_j
    =s_j v_j\cdot x+z_j
$$
for the scalars $z_j=u_j\cdot \Phi_{k_j}(0)$.
Therefore
$$
	\varphi_j(a_n)-\varphi_j(a_{d+1})
    =s_j v_j\cdot (a_n-a_{d+1}).
$$
Since $v_j\to v_*$ and $s_j>0$ it follows for
$j$ large enough that
$\varphi_j(a_n)-\varphi_j(a_{d+1})\ne 0$.
By choosing $\rho>0$ small enough and taking
another subsequence, we may conclude that
$$\varphi_j(B_{2\rho}(a_{n}))\cap \varphi_j(B_{2\rho}(a_{d+1}))=\emptyset
\words{for every} j.$$
For each $j$ at least one of the statements $0\not\in\varphi_j(B_{2\rho}(a_{n}))$
or $0\not\in\varphi_j(B_{2\rho}(a_{d+1}))$ is true.
By taking yet one more subsequence we may
fix $a$ to be either $a_n$ or $a_{d+1}$ and
$\gamma$ to be $\gamma_n$ or $\gamma_{d+1}$, respectively,
so that
$0\not\in \varphi_j(B_{2\rho}(a))$ for every $j$.
It follows that $S_\gamma(a)=a$ and
$0\not\in \Phi_{k_j}(B_{2\rho}(a))$ for every $j$.

Define
\begin{plain}\begin{equation}\label{deltadef}\eqalign{
	\delta_k &=\inf\big\{\,\|\Phi_k(x)\|: x\in B_{\rho}(a)\,\big\}\cr
	\Delta_k &=\sup\big\{\,\|\Phi_k(x)\|: x\in B_{\rho}(a)\,\big\}.
  }\end{equation}\end{plain}%
If $D\Phi_k=0$ then $\delta_k=\Delta_k$.
Otherwise $0\not\in\varphi_k(B_{2\rho}(a))$ implies that
for $x\in B_{\rho}(a)$
$$
	\|\Phi_k(x)\|\ge |u_k\cdot\Phi_k(x)|
		= |s_kv_k\cdot x+z_k|\ge s_k\rho,
$$
and therefore $\delta_k\ge s_k\rho$.
On the other hand $\|\Phi_k(x)-\Phi_k(y)\|\le s_k \|x-y\|$
implies that
$$\Delta_k\le \delta_k + 2s_k\rho \le 3\delta_k.$$
This completes the proof of the lemma.
\end{proof}

We are now ready begin the proof of Theorem \ref{thmnotWSPRd}.
The proof will construct a weak tangent to $F$ that has
dimension greater than one; we recover a `one-dimensional' structure by constructing  a set all of whose points lie in a particular cone in $\R^n$. For a set $U\in\R^d$ we use $$
\cone(U)=\{\, \lambda u : \lambda>0\hbox{ and } u\in U\,\}
$$
for the cone generated by $U$.

\begin{proof}[Proof of Theorem \ref{thmnotWSPRd}]
Since the weak separation condition is not satisfied,
there exist $\alpha_k,\beta_k\in\I^*$ such that
$0<\|S_{\alpha_k}^{-1}\circ S_{\beta_k}-I\|_\infty\to 0$.
Define $\Phi_k=S_{\alpha_k}^{-1}\circ S_{\beta_k}-I$.
After taking subsequences we may assume by Lemma~\ref{findab}
that there exist
$\gamma\in\I^*$ and $a\in\R^d$ such that $S_\gamma(a)=a$
and $\rho>0$ such that the inequalities
\begin{equation}\label{D3d}
\sup\{\, \|\Phi_{k}(x)\| : x\in B_\rho(a)\,\}
\le 3\inf\{\, \|\Phi_{k}(x)\| : x\in B_\rho(a)\,\}
\end{equation}
and
$\rho\|D\Phi_k\|\le \|\Phi_k(a)\|$ are satisfied
for all $k$.
Defining $\Delta_k$ and $\delta_k$ as in \eqref{deltadef} we
may further express \eqref{D3d} as $\Delta_k\le 3\delta_k$.

Given
  $\eta\in(0,1)$, cover the unit sphere $\partial B_1(0)$ by a
  finite number $M$ of balls $B_{\eta/2}(w_m)$ where $\|w_m\|=1$
  and $m=1,2,\ldots,M$.  It follows that
$$
\R^d=\bigcup_{m=1}^M \cone(B_{\eta/2}(w_m)).
$$
For each $k$ choose $m_k$
such that $\Phi_k(a)/\|\Phi_k(a)\|\in B_{\eta/2}(w_{m_k})$.
Upon taking a subsequence, we obtain a single $w\in\partial B_1(0)$ such that
$${\Phi_k(a)/\|\Phi_k(a)\|}\in
	B_{\eta/2}(w)\words{for all} k.$$

Define $f=S_\gamma=c_\gamma O_\gamma x+b_\gamma$ and note
that $Df=c_\gamma O_\gamma$.
Let $\rho'=\rho\eta/5$ and
choose $M$ so large that $m\ge M$ implies that
$f^m(x)\in B_{\rho'}(a)$ for every $x\in F$.
Let $\epsilon_2=\eta/4$.
By Lemma \ref{lemG} there exists a finite set $\J\subseteq\I^*$
such that $U\in G$ implies there is $\alpha\in\J$
such that $\|U-O_\alpha\|<\epsilon_2$.

Given $\epsilon>0$
we now define natural numbers $k_j$, $m_j$
and maps $g_j$ and $h_j$ by induction on $j$.
Let $g_0=I$, $h_0=I$, $d_0=1$ and $W_0=I$.  Note that $Dg_0=d_0W_0$.
Choose $k_j$ and $m_j\ge M$ so large that
$$
	d_{j-1} c_*^{-1} c_\gamma^{-m_j}\delta_{k_j}<\epsilon
	\le d_{j-1} c_*^{-1} c_\gamma^{-m_j-1} \delta_{k_j}
$$
and let $\gamma_j\in \J$ be chosen so
$
\| O_{\gamma_j}^{-1} O_\gamma^{-m_j} W_{j-1}-I\|
=\| W_{j-1} O_{\gamma_j}^{-1} O_\gamma^{-m_j}-I\|
\le\epsilon_2$.  Define
$$
g_j=g_{j-1}\circ S_{\gamma_j}^{-1}\circ f^{-m_j}\circ S_{\alpha_{k_j}}^{-1}
\words{and}
h_j=S_{\beta_{k_j}}\circ f^{m_j}\circ S_{\gamma_j}\circ h_{j-1}.
$$
Let $d_j$ be the ratio for $g_j$ such that
$$
	\|g_j(x)-g_j(y)\|=d_j\|x-y\|\words{for all}x,y\in\R^d
$$
and let $W_j$ be the $d\times d$
orthogonal matrix such that $Dg_j=d_jW_j$.

We now show that the sequence of functions $g_j$ and $h_j$ defined
above satisfy
\begin{equation}
c_* c_\gamma\epsilon\le
	\|(g_j\circ h_j- g_{j-1}\circ h_{j-1})(x)\|
	\le 3\epsilon
\end{equation}
and
\begin{equation}
(g_j\circ h_j- g_{j-1}\circ h_{j-1})(x)\in\cone(B_\eta(w))
	\words{for all} x\in F
\end{equation}
for every $j\in\N$.
Since
$$
	f^{m_j}\circ S_{\gamma_j}\circ h_{j-1}(x)\in B_{\rho'}(a)
		\subseteq B_{\rho}(a)
\words{for all} x\in F,
$$
then
\begin{plain}$$\eqalign{ (g_j\circ h_j- g_{j-1}\circ h_{j-1})(x)
    &= g_{j-1}\circ S_{\gamma_j}^{-1}\circ
		f^{-m_j}(S_{\alpha_{k_j}}^{-1} \circ
    S_{\beta_{k_j}}\circ f^{m_j} \circ S_{\gamma_j}\circ h_{j-1}(x)) \cr
&\qquad-
g_{j-1}\circ
    S_{\gamma_j}^{-1}\circ
		f^{-m_j}(f^{m_j} \circ S_{\gamma_j}\circ h_{j-1}(x))\cr
	&=d_{j-1} c_{\gamma_j}^{-1}
    c^{-m_j} W_{j-1} O_{\gamma_j}^{-1}	
		O_\gamma^{-m_j}\Phi_{k_j} (f^{m_j}\circ S_{\gamma_j}\circ h_{j-1}(x))
  }$$\end{plain}%
which implies
$$
c_* c_\gamma\epsilon\le
	d_{j-1}c_{\gamma_j}^{-1} c^{-m_j} \delta_{k_j}\le
	\|(g_j\circ h_j- g_{j-1}\circ h_{j-1})(x)\|
	\le d_{j-1}c_{\gamma_j}^{-1} c^{-m_j} \Delta_{k_j} \le 3\epsilon
$$
for all $x\in F$.

Write $y=f^{m_j}\circ S_{\gamma_j}\circ h_{j-1}(x)$,
$z=\Phi_{k_j}(y)$ and
$\tilde z=W_j O_{\gamma_j}^{-1} O_\gamma^{-m_j} z$.
Then
$$
    \|z-\Phi_{k_j}(a)\|=
    \|\Phi_{k_j}(y)-\Phi_{k_j}(a)\|\le \|D\Phi_{k_j}\| \|y-a\|
    \le
\rho'
\|D\Phi_{k_j}\|
$$
which implies that
$$  \|\tilde z - z\|\le \epsilon_2 \|z\|
	\le \|z-\Phi_{k_j}(a)\|+\|\Phi_{k_j}(a)\|
        \le \epsilon_2 \rho'\|D\Phi_{k_j}\| +\epsilon_2 \|\Phi_{k_j}(a)\|.
$$
Since $\eta<1$, $\rho'=\rho\eta/5$, $\epsilon_2=\eta/4$
and $\rho \|D\Phi_{k_j}\|\le\|\Phi_{k_j}(a)\|$ it follows that
\begin{plain}$$\eqalign{
    \|\tilde z-\Phi_{k_j}(a)\|
        &\le (1+\eta/4)(\eta/5)\rho\|D\Phi_{k_j}\|+\eta/4 \|\Phi_{k_j}(a)\|\cr
        &\le \big((5/4)(\eta/5)+\eta/4\big) \|\Phi_{k_j}(a)\|
        = (\eta/2) \|\Phi_{k_j}(a)\|.
}$$\end{plain}%
Consequently
\begin{plain}\begin{equation}\label{incone}
{W_j O_{\gamma_j}^{-1} O_\gamma^{-m_j}
    \Phi_{k_j} \big(f^{m_j}\circ S_{\gamma_j}\circ h_{j-1}(x)\big)\over
		\|\Phi_{k_j}(a)\|}
    \in B_{\eta/2}\bigg({\Phi_{k_j}(a)\over \|\Phi_{k_j}(a)\|}\bigg)
        \subseteq B_{\eta}(w).
\end{equation}\end{plain}%
It follows that
$$
(g_j\circ h_j- g_{j-1}\circ h_{j-1})(x)\in\cone(B_\eta(w))
	\words{for all} x\in F.
$$
Figure 1 shows representative locations
for $(g_1\circ h_1- I)(a)$ and
$(g_2\circ h_2- g_1\circ h_1)(a)$ in the cone $\cone(B_\eta(w))$.
Note that the norm of the projection of the points $g_j\circ h_j(a)$
along the $w$ direction always increases as $j$ increases.
In particular, we have
$$
	c_* c_\gamma \xi \epsilon \le
	w\cdot (g_j\circ h_j-g_{j-1}\circ h_{j-1})(a)\le 3\epsilon
$$
where $\xi=\sqrt{1-\eta^2}$.

\begin{figure}[H]
  \centering
  \includegraphics[width=60mm]{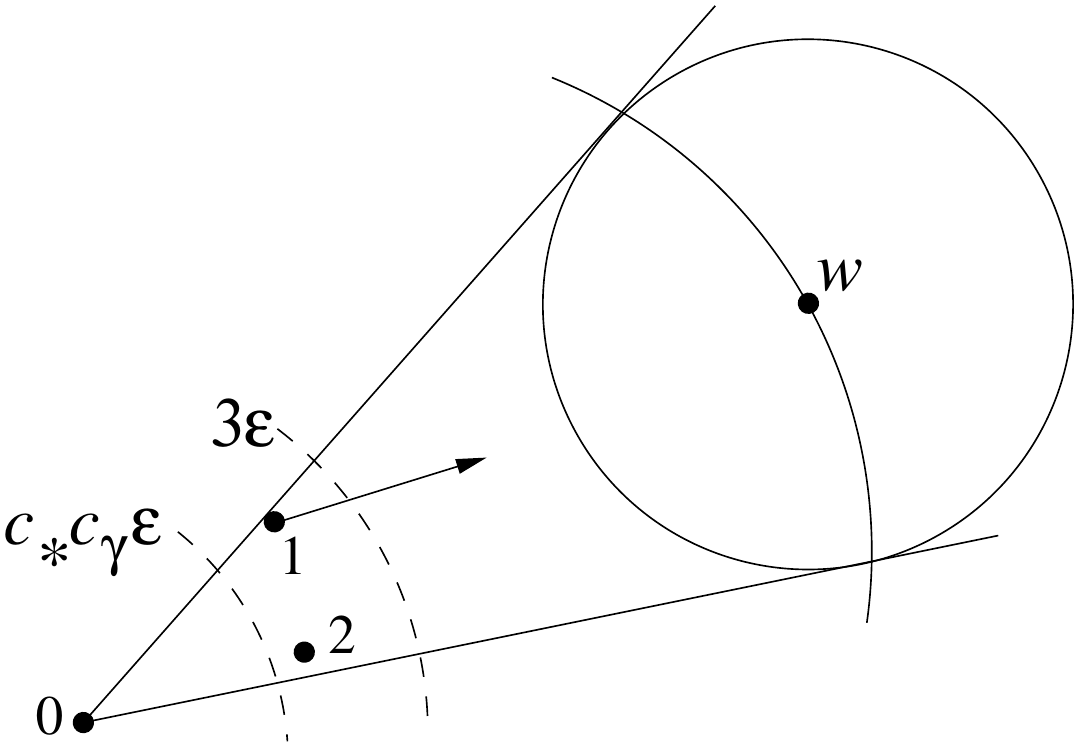}
	\caption{
        Locations of
		$(g_1\circ h_1- I)(a)$ and
		$(g_2\circ h_2- g_1\circ h_1)(a)$
		denoted by points $1$ and $2$.
        The arrow points to the location of $(g_2\circ h_2-I)(a)$.
    }
\end{figure}

We finish by constructing a weak tangent to $F$ that has
Assouad dimension 1.  The inclusion
$$
\{ a\}\cup \{\, g_j\circ h_j(a) : j=1,\ldots,n\,\}
\subseteq \{\, g_n\circ S_\beta(a) :\beta\in\I^*\,\}
$$
holds just as \eqref{tanpts} held in the proof for the real line.
However, unlike our proof for the real line,
we will construct a weak tangent rather than a weak
pseudo-tangent.  This is done in order to
make use of the fact that the space of all compact subsets of a compact set in $\R^d$
is a compact metric space with respect to the Hausdorff metric.

Let $g_{j,n}$ be the functions $g_j$ defined above when $\epsilon
	=(c_* c_\gamma \xi n)^{-1}$ and define $T_n=g_{n,n}$.  Note that the
cone $\cone(B_\eta(w))$ in the construction does not depend on $\epsilon$.
As before, it follows that
$$T_n F\supseteq \{a\}\cup\{\, g_{j,n}\circ h_{j,n}(a):j=1,\ldots n\}.$$
Since
$$
	n^{-1}\le w\cdot (g_{j,n}\circ h_{j,n}-g_{j-1,n}\circ h_{j-1,n})(a)
		\le 3(c_*c_\gamma\xi n)^{-1}
$$
we obtain
\begin{equation}\label{itsnotempty}
	\ph([a_w,a_w+1],w\cdot T_n(F))\le
		 3(c_*c_\gamma\xi n)^{-1}\to 0,
\end{equation}
where $a_w=w\cdot a$.
However $w\cdot T_n$ is not a similarity, so this is not a weak
pseudo-tangent and we
shall make use of one more compactness argument to finish the proof.

Let $$X=\overline{(\{a\}+\cone(B_\eta(w)))\cap B_{1/\xi}(a)}$$
and consider the sets
$B_n=T_n(F)\cap X$.  Since the space of all compact subsets of $X$
is a compact metric space with respect to the Hausdorff metric,
there exists a subsequence $n_k$ and a compact set $\hat F\in X$
such that
$$
	\dh(\hat F,T_{n_k}(F)\cap X)\to 0\words{as}k\to\infty.
$$
Thus, $\hat F$ is a weak tangent to $F$.
Since \eqref{itsnotempty} still holds along any subsequence, we
conclude that
$\hat F$ is not empty and in particular
$w \cdot\hat F=[a_w,a_w+1]$.
Since
$$\cover_{\hat F}(r,\rho)\ge
\cover_{B_r(a)\cap \hat F}(\rho)\ge \cover_{[a_w,a_w+\xi r)}(\rho)
\ge (2^{-1}\xi)(r/\rho)$$
for all $0<\rho<r<1$, it follows immediately that $\dima(\hat F)\ge 1$.
\end{proof}

\section{Examples}\label{examples}

For IFSs satisfying the WSP our result is sharp, and for IFSs not satisfying the WSP our result is sharp if we restrict our attention to $\R$. In this section we consider IFSs in $\mathbb{R}^d$ which do not satisfy the WSP and show that different phenomena are possible.
We conclude by mentioning our hope for future work
that leads to a refinement of our understanding of the
Assouad dimension in $\R^d$ when the weak separation
property does not hold.

\subsection{Intermediate Assouad dimension in the plane} \label{intexamplesect}

In light of Theorems \ref{thmWSP} and \ref{thmnotWSP}, one might naively 
imagine that the Assouad dimension of a self-similar set is either
equal to the Hausdorff dimension, or maximal (equal to the ambient
spatial dimension).  Trivially, this is false by noting that a
self-similar set in $\R$ with Assouad dimension equal to 1 and
similarity dimension strictly less than 1, is also a self-similar
subset of $\R^d$ via the natural embedding and this does not alter the
dimensions.  However, this is not very enlightening so we give a more
convincing counter example, where the self-similar set is not contained in any
hyperplane.  The key is to make the weak separation property fail
`only in one direction'.  We achieve this by modifying an example of
Bandt and Graf \cite{Bandt1992}, which shows that the weak separation
property can fail in the line, even if all the contraction ratios are
equal.  Consider the following four similarity maps on $[0,1]^2$:
$$ S_1(x) = {x/5}, \qquad S_2(x) = {x/5}+({t/5},0), $$
$$S_3(x) = {x/5} + ({4/5},0), \qquad S_4(x) = {x/5} +(0,{4/5}) $$
where $t \in [0,4]$.  Let $F$ denote the attractor of this system and
note that it is not contained in any line and has similarity dimension
equal to $\log 4 / \log 5$, which is an upper bound for the Hausdorff
dimension.  Let $F_1$ denote the projection of $F$ onto the first
coordinate and $F_2$ denote the projection of $F$ onto the second
coordinate.  $F_1$ is the self-similar attractor of the iterated
function system consisting of the three maps
\[
S'_1(x) = x/5, \qquad S'_2(x) = x/5+t/5, \qquad S'_3(x) = x/5 + 4/5
\]
defined on $[0,1]$ and $F_2$ is the self-similar attractor of the
iterated function system consisting of the two maps
\[
S''_1(x) = x/5, \qquad S''_2(x) = x/5 + 4/5
\]
defined on $[0,1]$, which satisfies the open set condition and thus
has Hausdorff dimension and Assouad dimension equal to $\log 2 / \log
5$.  Following Bandt and Graf \cite[Section 2 (5)]{Bandt1992}, if $t$
is rational, then the iterated function system for $F_1$ satisfies the
weak separation property.  However, it is possible to construct
irrational $t$ for which the weak separation property is not
satisfied.  Assume for a moment that we have done this for some $t \in
[0,4]$.  It follows from Theorem \ref{thmnotWSP} that $\dima F_1 = 1$.
Note that $F$ is contained in $F_1 \times F_2$ and contains an
isometrically embedded copy of $F_1$.  Since the Assouad dimension is
subadditive on product sets and monotone, the Assouad dimension of $F$
is bounded above by the sum of the Assouad dimensions of $F_1$ and
$F_2$ and below by the Assouad dimension of $F_1$.  This gives
\[
\dimh F \leq \log 4 / \log 5 < 1 \leq \ \dima F \leq 1 + \log 2 / \log
5 < 2
\]
as required.  Moreover, it is easy to see that this example could be modified to produce self-similar sets with Assouad dimension arbitrarily close to $1$, showing that the lower bound in Theorem \ref{mainRd} is sharp.

We will now show how to choose $t$ such that the weak separation
property fails for the defining iterated function system for $F_1$.  Following Bandt and Graf \cite[Section 2 (5)]{Bandt1992}, let
\[
t = 4 \sum_{k=0}^{\infty} 5^{-2^k} = 0.9664102400 \dots
\]
and observe that elements $ (S'_{\textbf{\emph{i}}})^{-1} \circ
S'_{\textbf{\emph{j}}} \in \mathcal{E}$ where $\lvert
\textbf{\emph{i}} \rvert = \lvert \textbf{\emph{j}} \rvert = n$ are
precisely maps of the form
\[
x \mapsto x + \sum_{k=0}^{n-1} 5^k a_k
\]
for any sequence $a_k$ over $\{0, \pm 4, \pm t, \pm(4-c)\}$.  Let
$n=2^m+1$ for some large $m \in \mathbb{N}$ and choose the
coefficients $\{a_k\}_{k=0}^{n-1}$ as follows:
\[
a_k = \left \{ \begin{array}{cc}
    -4 &  \qquad \text{if }  k = 2^m-2^l \text{ for some } l=0, \dots, m\\
    t &  \text{if } k=n-1 \\
    0 & \text{otherwise}\\
  \end{array} \right.
\]
This renders the map above equal to
\[
x \mapsto x + 5^{2^m} t - 4 \sum_{k=0}^{m} 5^{2^m-2^k} = x + 4
\sum_{k=m+1}^{\infty} 5^{2^m-2^k}
\]
which approximates the identity map arbitrarily closely by making $m$
(and thus $n$) large.

\begin{figure}[H]
  \centering
  \includegraphics[width=120mm]{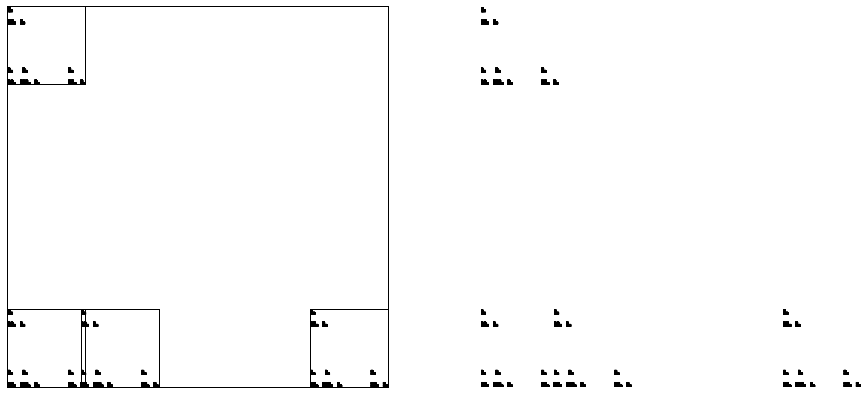}
  \caption{A self-similar set in the plane with intermediate Assouad
    dimension where $t = 0.9664102400 \dots$ is chosen as above to
    guarantee the failure of the weak separation property for $F_1$.
    The version on the left includes squares to indicate the images of
    the four maps in the iterated function system.}
\end{figure}

\subsection{Full Assouad dimension in $\R^d$}

Here we show how to generalise the example given in Fraser \cite[Section
3.1]{Fraser2013} to arbitrary dimensions.  In particular, we show how
to construct a self-similar set in $\R^d$ with arbitrarily small
Hausdorff dimension, but full Assouad dimension.

Let $d \in \mathbb{N}$ and $\Lambda = \big\{ z = (z_1, \dots, z_d) :
z_i \in \{0,1\} \text{ for $i = 1, \dots, d$}\big\}$ be the set of
$2^d$ corners of the unit hypercube $[0,1]^d$. Let $\alpha, \beta,
\gamma \in (0,1)$ be such that $(\log\beta)/(\log\alpha) \notin
\mathbb{Q}$ and define similarity maps $S_\alpha, S_\beta$ and
$S_\gamma^{z}$ ($z \in \Lambda \setminus \{(0, \dots, 0)\}$) mapping
$[0,1]^d$ into itself by
\[
S_\alpha(x) = \alpha x, \qquad S_\beta(x) = \beta x \qquad \text{and}
\qquad S_\gamma^{z}(x) = \gamma x +(1-\gamma)z .
\]
Let $F \subseteq [0,1]^d$ be the self-similar attractor of the
iterated function system consisting of these $1 + 2^d$ maps, which we
will now prove has full Assouad dimension, $\dima F = d$, by showing
that $[0,1]^d$ is a weak tangent to $F$.  Let $X = [0,1]^d$ and assume
without loss of generality that $\alpha<\beta$.  For each $k \in
\mathbb{N}$ let $T_k$ on $\R^d$ be defined by
\[
T_k(x) = \beta^{-k}x
\]
and we will now show that $T_k(F) \cap [0,1]^d \to_{d_\mathcal{H}}
[0,1]^d$.  Let
\[
E_k^d := \bigcup_{ z \in \Lambda} \Big( \big\{\alpha^m\beta^n z : m
\in \mathbb{N}, \, n \in \{-k, \dots, \infty\} \big\} \cap [0,1]^d
\Big)
\]
and observe that $E_k^d \subseteq T_k(F) \cap [0,1]^d$ for each $k \in
\mathbb{N}$.  However,
\[
E_k^d = \prod_{l=1}^d \Big( \big\{\alpha^m\beta^n : m \in \mathbb{N},
\, n \in \{-k, \dots, \infty\} \big\} \cap [0,1] \Big)
\]
where the set $\big\{\alpha^m\beta^n : m \in \mathbb{N}, \, n \in
\{-k, \dots, \infty\} \big\} \cap [0,1]$ is precisely the pre-tangent
set $E_k$ which appears in \cite[Section 3.1]{Fraser2013}.  There it
was shown that $E_k \to_{d_\mathcal{H}} [0,1]$ from which it follows
immediately that $E_k^d \to_{d_\mathcal{H}} [0,1]^d$ and thus $ T_k(F)
\cap [0,1]^d \to_{d_\mathcal{H}} [0,1]^d$, proving the desired result.
Proving that $E_k \to_{d_\mathcal{H}} [0,1]$ follows by applying
Dirchlet's Theorem on Diophantine approximation and using the fact
that $\alpha$ and $\beta$ are not log-commensurable. We omit the
details but refer the reader to \cite[Section 3.1]{Fraser2013}.

The similarity dimension of $F$, which is an upper bound for the
Hausdorff dimension, is the unique solution, $s$, of
\[
\alpha^s + \beta^s + (2^d-1)\gamma^s=1
\]
which, by making $\alpha, \beta$ and $\gamma$ small, can be made
arbitrarily small (but positive) whilst retaining the
non-log-commensurable condition on $\alpha$ and $\beta$.  Finally, observe that despite the
fact we were able to utilise the product \emph{sub}structure of $F$
and thus the product structure of the tangent, $F$ itself is not a
product, nor could we generalise the example in \cite[Section
3.1]{Fraser2013} by simply taking the $d$-fold product of the one
dimensional version because the result would not be a self-similar
set, but a strictly self-affine set.

\subsection{Future work}

The lower bound (of 1) in Theorem \ref{thmnotWSPRd} concerning systems which fail the WSP is sharp by virtue of Section \ref{intexamplesect}.  We suspect, however, that there is a
natural additional condition one could add to `failure of the weak
separation property' that would guarantee, for example, that $\dima
F\ge 2$.  Exactly what this additional condition might be is an open
question and a topic of further study.

\section*{Acknowledgements}

This work was carried out while J.M.F. was at the University of Warwick where he was supported by the EPSRC grant EP/J013560/1.
J.C.R.\ is supported by an EPSRC Leadership Fellowship, grant
EP/G007470/1; this grant also supported the visit of A.M.H.\ to
Warwick and partially supported the time E.J.O.\ spent at Warwick while
on sabbatical leave from the University of Nevada Reno.

\vspace{5mm}

\noindent \emph{Email addresses:}\\ \\
jon.fraser32@gmail.com\\
henderson@math.ucr.edu\\
ejolson@unr.edu\\
j.c.robinson@warwick.ac.uk


\begin{thebibliography}{99}

\bibitem{Assouad1977} P. Assouad.  Espaces m\'etriques, plongements,
  facteurs, {\em Th\`ese de doctorat d'\'Etat, Publ. Math. Orsay
    223--7769, Univ. Paris XI, Orsay}, (1977).

\bibitem{Assouad1979} P. Assouad.  \'Etude d'une dimension m\'etrique
  li\'ee \`a la possibilit\'e de plongements dans $\R^n$, {\em
    C. R. Acad. Sci. Paris Sér. A-B} {\bf 288}, (1979), 731--734.

\bibitem{Bandt1992} C. Bandt, S. Graf.  Self-similar sets VII.  A
  characterization of self-similar fractals with positive Hausdorff
  measure, {\it Proc. Amer. Math. Soc.} {\bf 114}, (1992), 995--1001.


\bibitem{BonkKleiner05} M. Bonk and B. Kleiner.  Conformal dimension and Gromov hyperbolic groups with 2-sphere boundary,
{\it Geom. Topol.}, {\bf 9}, (2005), 219--246.


\bibitem{Bouligand1928} M.G. Bouligand.  Ensembles Impropres et Nombre
  Dimensionnel, {\it Bull. Sci. Math.} {\bf 52}, (1928), 320--344,
  361--376.

\bibitem{Edgar2008} G. Edgar. {\it Measure, Topology, and Fractal
    Geometry}. Springer, New York, 2nd edition, 2008.

\bibitem{Falconer1985} K. J. Falconer.  {\it The Geometry of Fractal
    Sets}, Cambridge University Press, New York, 1985.

\bibitem{Falconer1989} K.~J. Falconer.  Dimensions and measures of
  quasi self-similar sets, \emph{Proc. Amer. Math. Soc.}, {\bf 106},
  (1989), 543--554.

\bibitem{Falconer2003} K.~J. Falconer.  {\em Fractal Geometry:
    Mathematical Foundations and Applications}, John Wiley, 2nd Ed.,
  2003.

\bibitem{Fraser2013} J. M. Fraser.  Assouad type dimensions and
  homogeneity of fractals, \emph{Trans. Amer. Math. Soc.}, {\bf 366}, (2014), 6687--6733.

\bibitem{furstenberg} 
H. Furstenberg.
Ergodic fractal measures and dimension conservation,
\emph{Ergodic Theory Dynam. Systems}, {\bf 28}, (2008), 405--422. 

\bibitem{Heinonen2001} J. Heinonen.  \emph{Lectures on Analysis on
    Metric Spaces}, Springer-Verlag, New York, 2001.

\bibitem{Henderson2013} A. M. Henderson.  \emph{Assouad Dimension and the
    Open Set Condition}, Master's Thesis, University of Nevda Reno,
  2013, 1--78.

\bibitem{Hochman2013} M. Hochman.  On self-similar sets with overlaps
  and inverse theorems for entropy, \emph{Ann. of Math.}, {\bf 180}, (2014), 773--822.

\bibitem{KeithLaakso04}
S. Keith and T. Laakso.
Conformal Assouad dimension and modulus,
{\it Geom. Funct. Anal. }, {\bf 14},  (2004), 1278--1321. 

\bibitem{Lau1999} K.-S. Lau and S.-M. Ngai.  Multifractal measures and
  a weak separation property, \emph{Adv. Math.} {\bf 141}, (1999),
  45--96.

\bibitem{Luukkainen1998} J. Luukkainen.  Assouad dimension:
  antifractal metrization, porous sets, and homogeneous measures,
  \emph{J. Korean Math. Soc.}, {\bf 35}, (1998), 23--76.

\bibitem{Mackay2011} J. M. Mackay.  Assouad dimension of self-affine
  carpets, \emph{Conform. Geom. Dyn.} {\bf 15}, (2011), 177--187.

\bibitem{Mackay2010} J. M. Mackay and J. T. Tyson.  \emph{Conformal
    dimension. Theory and application}, University Lecture Series,
  54. American Mathematical Society, Providence, RI, 2010.

\bibitem{McLaughlin1987} J. McLaughlin.  A note on Hausdorff measures
  of quasi-self-similar sets, \emph{Proc. Amer. Math. Soc.}, {\bf
    100}, (1987), 183--186.

\bibitem{Moran1946} P. A. P. Moran.  Additive functions of intervals
  and Hausdorff measure, {\it Proc. Cambridge Philos. Soc.}, {\bf 42},
  (1946), 15--23.

\bibitem{Ngai2001} S.-M. Ngai and Y. Wang.  Hausdorff dimension of
  self-similar sets with overlaps, \emph{J. London Math. Soc.}, {\bf
    63}, (2001), 655--672.


\bibitem{Nguyen2002} N. Nguyen.  Iterated function systems of finite
  type and the weak separation property,
  \emph{Proc. Amer. Math. Soc.}, {\bf 130}, (2002), 483--487 .


\bibitem{Olsen2011} L. Olsen.  On the Assouad dimension of graph
  directed Moran fractals, \emph{Fractals}, {\bf 19}, (2011),
  221--226.

\bibitem{Olson2002} E. J. Olson.  Bouligand Dimension and Almost
  Lipschitz Embeddings, {\it Pacific J. of Math.}, {\bf
    202}:2, (2002), 459--474.

\bibitem{Olson2010} E. J. Olson and J. C. Robinson.  Almost
  Bi-Lipschitz Embeddings and Almost Homogeneous Sets, {\it
    Trans. Amer. Math. Soc.}, {\bf 362}:1, (2010), 145--168.

\bibitem{Peres2000} Y. Peres and B. Solomyak.  Problems on
  self-similar sets and self-affine sets: an update. \emph{Fractal
    geometry and stochastics, II} (Greifswald/Koserow, 1998), 95--106,
  \emph{Progr. Probab.}, 46, Birkhäuser, Basel, 2000.

\bibitem{Robinson2011} J. C. Robinson.  {\em Dimensions, Embeddings,
    and Attractors}, Cambridge University Press, 2011.

\bibitem{Schief1994} A. Schief.  Separation properties for
  self-similar sets, \emph{Proc. Amer. Math. Soc.}, {\bf 122}, (1994),
  111--115.

\bibitem{Tyson01}
J. T. Tyson. Lowering the Assouad dimension by quasisymmetric mappings, 
{\it Illinois J. Math.}, {\bf 45}, (2001), 641--656. 

\bibitem{Zerner1996} M. P. W. Zerner.  Weak separation properties for
  self-similar sets, \emph{Proc. Amer. Math. Soc.}, {\bf 124}, (1996),
  3529--3539 .

\end{thebibliography}
\end{document}